\def\imod#1{\allowbreak\mkern10mu({\operator@font mod}\,\,#1)}
\theoremstyle{plain}
\newtheorem{thm}{Theorem}
\theoremstyle{definition}
\newtheorem{defi}[thm]{Definition}
\newtheorem{cor}[thm]{Corollary}
\newtheorem*{cor*}{Corollary}
\newtheorem*{lem*}{Lemma}
\newtheorem*{thm*}{Theorem}
\newtheorem*{defi*}{Definition}
\newtheorem{lem}[thm]{Lemma}
\newtheorem{question}[thm]{Question}
\newtheorem*{clm*}{Claim}
\newtheorem{rem}[thm]{Remark}
\renewcommand{\phi}{\varphi}
\newcommand{\concatB}{\mathbin{\rotatebox[origin=c]{90}{\scalebox{.7}{(\kern1ex)}}}}
\begin{document}

\title[Non-linear iterations and higher splitting]{Non-linear iterations and higher splitting}

\author{\"Omer Faruk Ba\u{g}}
\address{Institute of Mathematics, Kurt G\"odel Research Center, University of Vienna, Augasse 2-6, UZA 1 - Building 2, 1090 Wien, Austria}
\email{oemer.bag@univie.ac.at}

\author{Vera Fischer}
\address{Institute of Mathematics, Kurt G\"odel Research Center, University of Vienna, Augasse 2-6, UZA 1 - Building 2, 1090 Wien, Austria}
\email[Corresponding Author]{vera.fischer@univie.ac.at}

\thanks{\emph{Acknowledgments.}: The authors would like to thank the Austrian Science Fund (FWF) for the generous support through Grant Y1012-N35.}

\subjclass[2000]{03E35, 03E17}

\keywords{cardinal characteristics; eventually narrow sequences; generalised Baire spaces, large cardinals; forcing; non-linear iterations}

\begin{abstract} 
We show that generalized eventually narrow sequences on a strongly inaccessible cardinal $\kappa$ are preserved under the Cummings-Shaleh
non-linear iterations of the higher Hechler forcing on $\kappa$. 
Moreover assuming GCH, $\kappa^{<\kappa}=\kappa$, we show that
\begin{enumerate}
	\item if $\kappa$ is strongly unfoldable, $\kappa^+\leq\beta=\hbox{cf}(\beta)\leq \hbox{cf}(\delta)\leq\delta\leq\mu$ and $\hbox{cf}(\mu)>\kappa$, 
	then there is a cardinal preserving generic extension in which 
	$$\mathfrak{s}(\kappa)=\kappa^+\leq\mathfrak{b}(\kappa)=\beta\leq\mathfrak{d}(\kappa)=\delta\leq 2^\kappa=\mu.$$
	\item  if $\kappa$ is strongly inaccessible, $\lambda>\kappa^+$, then in the generic extension obtained as the $<\kappa$-support iteration of $\kappa$-Hechler forcing of length $\lambda$ there are no $\kappa$-towers of length $\lambda$.
\end{enumerate}
\end{abstract}

\maketitle
\section{Introduction}

The topic ``cardinal characteristics of the continuum'' is a broad subject, which has been studied in many research articles and surveys like \cite{blass} or \cite{vnDwn}. Combinatorial cardinal invariants, some of which build the subject of this article, give an insight to the combinatorial and topological structure of the (generalized) real line.

The splitting, bounding and dominating numbers, denoted by $\mathfrak{s}, \mathfrak{b}$ and $\mathfrak{d}$ are due to David D. Booth, Fritz F. Rothberger and Miroslav Kat\v{e}tov respectively. While
$\mathfrak{s}, \mathfrak{b} \leq \mathfrak{d}$, the characteristics $\mathfrak{s}$ and $\mathfrak{b}$ are independent. Introducing the notion of an eventually narrow sequence and showing the preservation of such sequences under the finite support iterations of Hechler poset for adjoining a dominating real, Baumgartner and Dordal showed that consistently $ \aleph_1 = \mathfrak{s} < \mathfrak{b}$. The consistency of $\mathfrak{b}=\aleph_1<\mathfrak{s}=\aleph_2$  is due to S. Shelah and is in fact the first appearance of the method of creature forcing.
Studying the existence of ultrafilters $\mathcal{U}$, which have the property that for a given unbounded family $\mathcal{H}\subseteq{^\omega\omega}$, the relativized Mathis poset $\mathbb{M}(\mathcal{U})$ preserves the family $\mathcal{H}$ unbounded 
(appearing more recently in the literature as $\mathcal{H}$-Canjar filters, see~\cite{OGDK}), the second author jointly with J. Stepr\={a}ns generalized the result to an arbitrary regular uncountable $\kappa$, i.e. showed the consistency of $\mathfrak{b}=\kappa<\mathfrak{s}=\kappa^+$ (see~\cite{VFJS}). The more general inequality of $\mathfrak{b}=\kappa < \mathfrak{s}=\lambda$ was obtained only after certain developments of the method of matrix iteration, namely the appearance of a method of preserving maximal almost disjoint families along matrix iterations introduced by the second author and J. Brendle in~\cite{JBVF}.

In strong contrast to the countable case, the generalized bounding and splitting numbers are not independent. Indeed, Raghavan and Shelah showed that  $\mathfrak{s}(\kappa) \leq \mathfrak{b}(\kappa)$ for each regular uncountable $\kappa$. Moreover, by a result of Motoyoshi, 
$\mathfrak{s}(\kappa) \geq \kappa$ if and only if $\kappa$ is strongly inaccessible. Later T. Suzuki showed that under the same assumption $\mathfrak{s}(\kappa) \geq \kappa^+$ if and only if $\kappa$ is weakly compact (see e.g. \cite{szk}). An easy diagonalization argument, shows that $\kappa^+\leq\mathfrak{b}(\kappa)$ and so unless $\kappa$ is weakly compact, $\mathfrak{s}(\kappa) < \kappa^+ \leq \mathfrak{b}(\kappa)$.

In this article we want to further address the behaviour of $\mathfrak{s}(\kappa)$ and $\mathfrak{b}(\kappa)$ in the presence of large cardinal properties on $\kappa$. Our results are to a great extent based on the preservation of a strong splitting property, namely the preservation of generalized eventually narrow sequences in the context of linear and non-linear iterations.

\begin{defi*} $ $
	\begin{enumerate}
		\item A sequence $\langle a_\xi : \xi < \lambda \rangle$, where each $a_\xi$ is in $[\kappa]^\kappa$, is eventually narrow (we also say $\kappa$-eventually narrow) if $\forall a \in [\kappa]^\kappa$ $\exists \xi < \lambda $ $\forall \eta > \xi$ $a \not\subseteq^* a_\eta$.

		\item A sequence $\langle a_\xi : \xi < \lambda \rangle$, where each $a_\xi$ is in $[\kappa]^\kappa$, is eventually splitting (we also say $\kappa$-eventually splitting) if $\forall a \in [\kappa]^\kappa$ $\exists \xi < \lambda $ $\forall \eta > \xi$ $a_\eta$ splits $a$. 
	\end{enumerate}	
\end{defi*}

Devising a special forcing notion  $D(\kappa, Q)$, which can be interpreted as the non-linear iteration of the higher Hechler forcing 
(see Definition~\ref{def_H} and Definition~\ref{def_CS}), Cummings and Shelah show that any admissible assignment to $\mathfrak{b}(\kappa),\mathfrak{d}(\kappa)$ and $2^\kappa$ for $\kappa$ regular uncountable, can be realized in a generic extension of a model of GCH (see Section \ref{prelim}). We show that if $\kappa$ is strongly inaccessible, then generalized eventually narrow sequences on $\kappa$ are preserved not only by linear iterations of the higher Hechler forcing, but also by the non-linear iterations of Cummings and Shelah, which is our main preservation result (see Theorem~\ref{thm_pres}):

\begin{thm*} Assume GCH, $\kappa^{<\kappa}=\kappa$, $\kappa$ is strongly inaccessible, $cf(\lambda) > \kappa$. If $\tau=\langle a_\xi:\xi<\lambda\rangle$ is a $\kappa$-eventually narrow sequence in $V$, then $\tau$ remains eventually narrow in  $V^{D(\kappa, Q)}$.
\end{thm*}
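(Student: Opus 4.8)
The plan is to argue by contradiction. Assuming the conclusion fails, fix a $D(\kappa,Q)$-name $\dot a$ and a condition $p_1$ such that $p_1$ forces $\dot a\in[\kappa]^\kappa$ and that $\{\eta<\lambda:\dot a\subseteq^* a_\eta\}$ is cofinal in $\lambda$; the goal is to produce, in $V$, a set $b\in[\kappa]^\kappa$ and a cofinal set $Y\subseteq\lambda$ with $b\subseteq a_\eta$ for every $\eta\in Y$, contradicting the eventual narrowness of $\tau$ in $V$. We use two structural facts recalled in Section~\ref{prelim}: $D(\kappa,Q)$ is $<\kappa$-closed and has the $\kappa^+$-cc. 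The first implies that $\kappa$ stays regular, so every subset of $\kappa$ of size $<\kappa$ is bounded; together with $\cf(\lambda)>\kappa$ the second implies that $\cf(\lambda)$ is preserved.

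The first step is to localise the name $\dot a$ to a small subforcing. Fix a large regular $\theta$ and an elementary submodel $M\prec H_\theta$ with $|M|=\kappa$, ${}^{<\kappa}M\subseteq M$, $\kappa+1\subseteq M$ and $D(\kappa,Q),\dot a,p_1,\tau\in M$ (possible as $\kappa^{<\kappa}=\kappa$). Using the $<\kappa$-closure and the $\kappa^+$-cc of $D(\kappa,Q)$, one checks in the standard way that $\bar D:=D(\kappa,Q)\cap M$ is a complete suborder of $D(\kappa,Q)$, of size $\le\kappa$. Replacing $\dot a$ by an equivalent nice name chosen in $M$, each of the antichains occurring in it is an antichain of $D(\kappa,Q)$ of size $\le\kappa$ lying in $M$, hence (as $\kappa+1\subseteq M$) a subset of $M$, hence a subset of $\bar D$; so $\dot a$ is, up to equivalence, a $\bar D$-name, and $p_1\in\bar D$. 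Since the statement ``$\dot a\in[\kappa]^\kappa$ and $\{\eta:\dot a\subseteq^* a_\eta\}$ is cofinal in $\lambda$'' mentions only $\dot a$, the ground-model objects $\tau,\lambda$, and cofinality in $\lambda$, it is absolute between the $\bar D$-extension and the $D(\kappa,Q)$-extension; hence $p_1$ forces it over $\bar D$ as well, and it suffices to derive a contradiction under the additional hypothesis $|D(\kappa,Q)|\le\kappa$.

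Now a counting argument finishes the proof. Put $Z:=\{\eta<\lambda:\exists r\le p_1\ (r\Vdash\dot a\subseteq^* a_\eta)\}$. Given $\zeta<\lambda$, $p_1$ forces ``$\exists\eta\in(\zeta,\lambda)\ \dot a\subseteq^* a_\eta$'', so some $r\le p_1$ forces $\dot a\subseteq^* a_{\eta^*}$ for a fixed $\eta^*>\zeta$; thus $Z$ is cofinal in $\lambda$. For each $\eta\in Z$ choose $r\le p_1$ with $r\Vdash\dot a\subseteq^* a_\eta$, i.e.\ $r\Vdash|\dot a\setminus a_\eta|<\kappa$; as $\kappa$ is regular in the extension this gives $r\Vdash\sup(\dot a\setminus a_\eta)<\kappa$, and extending $r$ to decide that supremum we obtain $r_\eta\le p_1$ and $\gamma_\eta<\kappa$ with $r_\eta\Vdash\dot a\cap[\gamma_\eta,\kappa)\subseteq a_\eta$. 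The pairs $(r_\eta,\gamma_\eta)$ range over $D(\kappa,Q)\times\kappa$, a set of size $\le\kappa<\cf(\lambda)$, so, since $Z$ is cofinal in $\lambda$, some single pair $(r^*,\gamma^*)$ has a fibre $Y:=\{\eta\in Z:(r_\eta,\gamma_\eta)=(r^*,\gamma^*)\}$ that is cofinal in $\lambda$ (otherwise $Z$, being the union of $\le\kappa$ bounded sets, would be bounded in $\lambda$ since $\cf(\lambda)>\kappa$). Then $r^*\Vdash\dot a\cap[\gamma^*,\kappa)\subseteq a_\eta$ for every $\eta\in Y$, i.e.\ $r^*\Vdash\dot a\cap[\gamma^*,\kappa)\subseteq b$ where $b:=\bigcap_{\eta\in Y}a_\eta\in V$; and since $r^*\le p_1$ forces $|\dot a|=\kappa$ it forces $|\dot a\cap[\gamma^*,\kappa)|=\kappa$, whence $b$ is unbounded in $\kappa$, so $b\in[\kappa]^\kappa$. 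But $b\subseteq a_\eta$ for all $\eta$ in the cofinal set $Y$, contradicting the eventual narrowness of $\tau$ in $V$.

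The step carrying the real weight is the localisation: verifying that $\bar D=D(\kappa,Q)\cap M$ is a complete suborder of the non-linear iteration $D(\kappa,Q)$ and that a nice name for $\dot a$ is genuinely captured by it — here the structural analysis of $D(\kappa,Q)$ from Cummings and Shelah, especially its $<\kappa$-closure and $\kappa^+$-chain condition, is essential. Once the situation is reduced to a forcing of size $\le\kappa$, the rest is a soft pigeonhole argument that uses no further feature of the higher Hechler poset and would apply verbatim to any $<\kappa$-closed, $\kappa^+$-cc forcing.
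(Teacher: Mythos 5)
There is a genuine gap, and it sits precisely at the step you yourself flag as carrying the real weight: the claim that $\bar D:=D(\kappa,Q)\cap M$ is a complete suborder of $D(\kappa,Q)$ is false, and no standard argument from $<\kappa$-closure plus the $\kappa^+$-c.c.\ yields it. The obstruction is already visible in a single coordinate, i.e.\ for $\mathbb{H}(\kappa)$ itself. For $q=(s',g)\in\mathbb{H}(\kappa)\cap M$ to be a reduction of $p=(\emptyset,f)$, every $(t,h)\le q$ with $(t,h)\in M$ must be compatible with $p$, and compatibility with $p$ forces $t(\alpha)>f(\alpha)$ on all of $\dom(t)$; but extensions of $q$ inside $M$ are only constrained by $t(\alpha)>g(\alpha)$, so whenever $f(\alpha)>g(\alpha)$ for some $\alpha\geq\dom(s')$ one can build $t\in\kappa^{<\kappa}\uparrow\cap M$ with $g(\alpha)<t(\alpha)\le f(\alpha)$, giving an extension of $q$ in $M$ incompatible with $p$. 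Hence $p$ has a reduction in $\mathbb{H}(\kappa)\cap M$ only if some $g\in M\cap{}^\kappa\kappa$ dominates $f$ on a tail; since $|M\cap{}^\kappa\kappa|\le\kappa<\kappa^+\le\mathfrak{d}(\kappa)$, there is an $f$ admitting no reduction, so $\mathbb{H}(\kappa)\cap M$ is not a regular suborder, and the same failure occurs coordinatewise in $D(\kappa,Q)$. Without the complete embedding you cannot transfer ``$p_1$ forces the bad statement'' down to $\bar D$, nor replace the conditions $r_\eta$ (which live in the full poset) by members of a set of size $\kappa$; note also that the trace of $r_\eta$ on the antichains of the nice name ranges over $2^\kappa$ possibilities, so the pigeonhole cannot be run at that level either.

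Your closing remark is in fact the clearest symptom that something must be wrong: the theorem is \emph{not} true for arbitrary $<\kappa$-closed, $\kappa^+$-c.c.\ forcings. For instance, generalized Mathias forcing relative to the $<\kappa$-closed filter generated by a $\kappa$-tower of length $\kappa^+$ is $<\kappa$-closed and $\kappa$-centered on stems (hence $\kappa^+$-c.c.), yet it adds a pseudo-intersection of the tower, and a tower is an eventually narrow sequence by the Remark in Section~\ref{no_long_towers}. So any correct proof must exploit the specific combinatorics of the Hechler coordinates. That is exactly what the paper does: it analyzes dense open sets of $D(\kappa,Q)$ via the derivative hierarchy of Definition~\ref{def_derivatives} and Theorem~\ref{thm1}, and uses this, inside an elementary submodel, to produce a ground-model set $K\in[\kappa]^\kappa$ of possible values of an enumeration of $\dot a$, which then contradicts $p\Vdash\dot a\subseteq^* a_{\eta_0}$. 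Your pigeonhole endgame is sound and close in spirit to the paper's final contradiction, but the preceding reduction to a forcing of size $\kappa$ cannot be obtained for free.
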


In analogy with~\cite{drdl} we introduce a notion of a derivative for a dense open subset of $D(\kappa, Q)$ (see Definition~\ref{def_derivatives} and Theorem~\ref{thm1}), which is a key tool in obtaining the above theorem. Imposing further large cardinal properties on $\kappa$, we come to our main result: 

\begin{thm*}
	(GCH, $\kappa^{<\kappa} = \kappa$) Assume $\kappa$ is strongly unfoldable and $\beta, \delta, \mu$ are cardinals with $\kappa^+ \leq \beta= cf(\beta) \leq cf(\delta) \leq \delta \leq \mu$ and $cf(\mu)>\kappa$; then there exists a cardinal preserving generic extension  of the ground model, where $\mathfrak{s}(\kappa) = \kappa^+ \land \mathfrak{b}(\kappa) = \beta \land \mathfrak{d}(\kappa) = \delta \land \mathfrak{c}(\kappa) = \mu$ holds.
\end{thm*}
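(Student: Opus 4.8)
The plan is to realize the desired configuration of $\mathfrak{s}(\kappa),\mathfrak{b}(\kappa),\mathfrak{d}(\kappa),\mathfrak{c}(\kappa)$ via a single forcing iteration combining the Cummings--Shelah non-linear iteration $D(\kappa,Q)$ with a strong-unfoldability-based preparation (or a lottery sum bookkeeping device) that takes care of $\mathfrak{s}(\kappa)=\kappa^+$ while the nonlinear structure of $Q$ takes care of $\mathfrak{b}(\kappa)=\beta$, $\mathfrak{d}(\kappa)=\delta$ and $\mathfrak{c}(\kappa)=\mu$. First I would recall the Cummings--Shelah analysis: for a suitable partial order $Q$ of size $\mu$ whose structure encodes a set of $\delta$-many cofinal branches organized so that no $<\beta$-sized subfamily is dominating (the standard way is to take $Q$ to be a product/tree of height related to $\delta$ over an index set of size $\mu$), the forcing $D(\kappa,Q)$ is $\kappa$-closed, $\kappa^+$-c.c.\ (under $\kappa^{<\kappa}=\kappa$ and GCH), hence cardinal preserving, and yields $\mathfrak{b}(\kappa)=\beta$, $\mathfrak{d}(\kappa)=\delta$, $2^\kappa=\mu$ in the extension. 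This gives all the ``upper'' invariants; what is not automatic is that $\mathfrak{s}(\kappa)$ stays as small as $\kappa^+$.

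The second and central step is to secure $\mathfrak{s}(\kappa)=\kappa^+$. Here I would use the main preservation theorem quoted just above: if $\tau=\langle a_\xi:\xi<\lambda\rangle$ is a $\kappa$-eventually narrow sequence in $V$ (with $\lambda=\kappa^+$, say), then $\tau$ remains eventually narrow in $V^{D(\kappa,Q)}$. An eventually narrow sequence of length $\kappa^+$ is in particular a splitting family of size $\kappa^+$ (this is the standard implication: eventually narrow $\Rightarrow$ eventually splitting $\Rightarrow$ splitting), so its persistence witnesses $\mathfrak{s}(\kappa)\leq\kappa^+$ in the extension. Combined with Suzuki's theorem that $\mathfrak{s}(\kappa)\geq\kappa^+$ requires $\kappa$ weakly compact---and the genericity/preparation should be arranged so that weak compactness (hence $\mathfrak{s}(\kappa)\geq\kappa^+$) holds, or more directly so that $\mathfrak{s}(\kappa)>\kappa$ always and the ground model already has $\mathfrak{s}(\kappa)=\kappa^+$ witnessed by such a $\tau$ which we then preserve---we conclude $\mathfrak{s}(\kappa)=\kappa^+$. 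The role of strong unfoldability (rather than mere inaccessibility) is to run a reflection/preparation argument guaranteeing that an eventually narrow sequence of the right length exists in the ground model after forcing $\kappa^{<\kappa}=\kappa$ and GCH up to $\mu$; strong unfoldability is exactly the hypothesis that makes ``long'' generic extensions of length up to $\mu$ behave well with respect to $\kappa$ (it survives the relevant $\kappa$-closed $\kappa^+$-c.c.\ preparatory forcing, and supplies elementary embeddings $j:M\to N$ with $\crt(j)=\kappa$ and arbitrarily large target, letting us reflect the construction of $\tau$ and verify its eventual narrowness).

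Concretely, the steps in order: (1) force GCH and $\kappa^{<\kappa}=\kappa$ with a $\kappa$-closed, $\kappa^+$-c.c.\ preparation that preserves strong unfoldability of $\kappa$; (2) in the resulting model build a $\kappa$-eventually narrow sequence $\tau$ of length $\kappa^+$ (using the diagonalization/$\diamondsuit_\kappa$-type construction available from GCH$+\kappa^{<\kappa}=\kappa$, exactly as in the Baumgartner--Dordal style construction lifted to $\kappa$); (3) choose the poset $Q$ of size $\mu$ coding the branch structure so that $D(\kappa,Q)$ forces $\mathfrak{b}(\kappa)=\beta$, $\mathfrak{d}(\kappa)=\delta$, $2^\kappa=\mu$ --- this is the Cummings--Shelah construction, and the cardinal arithmetic constraints $\kappa^+\leq\beta=\cf(\beta)\leq\cf(\delta)\leq\delta\leq\mu$, $\cf(\mu)>\kappa$ are exactly their admissibility conditions; (4) invoke the preservation theorem to get that $\tau$ is still eventually narrow in $V^{D(\kappa,Q)}$, hence $\mathfrak{s}(\kappa)\leq\kappa^+$ there; (5) use Suzuki's characterization together with the preserved weak compactness/strong unfoldability of $\kappa$ to get $\mathfrak{s}(\kappa)\geq\kappa^+$, so $\mathfrak{s}(\kappa)=\kappa^+$; (6) check $D(\kappa,Q)$ (composed with the preparation) is cardinal preserving --- it is $\kappa$-closed and, by a $\Delta$-system / $\kappa^+$-c.c.\ argument using $\kappa^{<\kappa}=\kappa$, has the $\kappa^+$-c.c., and the preparation was chosen the same way, so all cardinals and cofinalities are preserved.

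The main obstacle I expect is step (4)--(5) interface: verifying that the eventually narrow sequence $\tau$ really survives the \emph{non-linear} iteration $D(\kappa,Q)$ rather than a linear one. That is precisely the content of the quoted preservation theorem, whose proof relies on the derivative construction for dense open subsets of $D(\kappa,Q)$ (Definition~\ref{def_derivatives}, Theorem~\ref{thm1}); the delicate point is that the tree-like index set $Q$ means a condition's ``past'' is not linearly ordered, so the usual fusion/rank argument for showing that a purported $a\in[\kappa]^\kappa$ almost-contained in cofinally many $a_\eta$ can be diagonalized away must be replaced by an argument over the derivative ranks. Since that theorem is available to us as a black box, the remaining genuine work here is the bookkeeping ensuring the ground-model $\tau$ has length exactly $\kappa^+$ and that nothing in the choice of $Q$ (whose size is $\mu\geq\kappa^+$) accidentally adds a small splitting family or collapses $\kappa^+$; both are handled by the $\kappa^+$-c.c.\ and by noting that any new $a\in[\kappa]^\kappa$ appears at some bounded stage of the iteration, where the eventual-narrowness of the tail of $\tau$ already defeats it.
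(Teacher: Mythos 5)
Your overall architecture coincides with the paper's: use the Cummings--Shelah poset $D(\kappa,R)$ (with $R$ built from $(\mu,\in)$ below a cofinal well-founded poset realizing $\mathfrak{b}=\beta$, $\mathfrak{d}=\delta$) to control $\mathfrak{b}(\kappa),\mathfrak{d}(\kappa),\mathfrak{c}(\kappa)$; make the strong unfoldability of $\kappa$ indestructible under $<\kappa$-closed $\kappa^+$-c.c.\ forcing (the paper uses Johnstone's theorem via a lottery preparation) so that Suzuki's theorem gives $\mathfrak{s}(\kappa)\geq\kappa^+$ in the extension; and use the preservation theorem for eventually narrow sequences to pin $\mathfrak{s}(\kappa)$ down to $\kappa^+$. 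However, there is a genuine gap in the step where you extract a small splitting family. You assert ``eventually narrow $\Rightarrow$ eventually splitting $\Rightarrow$ splitting,'' and the first implication is false --- it goes the other way. Eventually narrow only says that for every $a\in[\kappa]^\kappa$ eventually $|a\setminus a_\eta|=\kappa$; it says nothing about $|a\cap a_\eta|$. For instance, if every $a_\eta$ is a subset of a fixed $E\in[\kappa]^\kappa$ with $|\kappa\setminus E|=\kappa$ and the sequence is eventually narrow relative to $E$, then the whole sequence is eventually narrow but no $a_\eta$ splits $\kappa\setminus E$. So a preserved eventually narrow sequence of length $\kappa^+$ does not, by itself, witness $\mathfrak{s}(\kappa)\leq\kappa^+$, and as written your step (4) does not deliver the conclusion.

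The fix is exactly the observation recorded after Definition~\ref{def_evna}: a sequence $\tau=\langle a_\xi\rangle$ is eventually \emph{splitting} iff the interleaved sequence $\tau'$ with $b_{2\xi}=a_\xi$, $b_{2\xi+1}=\kappa\setminus a_\xi$ is eventually \emph{narrow}. So you should produce an eventually splitting sequence of length $\kappa^+$, pass to $\tau'$, preserve $\tau'$ narrow by Theorem~\ref{thm_pres} (note $\mathrm{cf}(\kappa^+)>\kappa$), and conclude that $\tau$ remains eventually splitting, hence splitting, in $V^{D(\kappa,R)}$. Your ground-model construction of the sequence (a diagonalization from $2^\kappa=\kappa^+$ and $\kappa^{<\kappa}=\kappa$) can be adapted to produce an eventually splitting sequence and would then work; the paper instead takes the first $\kappa^+$ many Hechler generics of the iteration, which form an eventually splitting sequence in the intermediate model $V^{P_{\kappa^+}}$ (every step adding a dominating $\kappa$-real adds a splitting $\kappa$-real over the previous model) and are preserved by the remainder of the non-linear iteration. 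Either route closes the gap; the rest of your outline (cardinal preservation from $\kappa$-closure plus the $\kappa^+$-c.c., and the lower bound from indestructible weak compactness) matches the paper.
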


Our techniques modify to the countable case and so in particular establish the consistency of    $\mathfrak{s}=\omega_1<\mathfrak{b}<\mathfrak{d}<\mathfrak{c}$. 
Alternatively, one can  force with $\mathbb{B}(\omega_1)$ over a model of
$\omega_1<\mathfrak{b}<\mathfrak{d}<\mathfrak{c}$. 
Note however (at least to the best knowledge of the authors) that even though there are already good higher analogues of random forcing 
(the poset for adjoining a single random real), we still do not seem to have an appropriate analogue of $\mathbb{B}(\omega_1)$ into the uncountable.

In addition, we show that in generic extensions obtained as the linear iteration of the higher Hechler poset, there are no long $\kappa$-towers. 

\begin{cor*}
(GCH, $\kappa^{<\kappa} = \kappa$) Assume $\kappa$ is strongly inaccessible. Then there is a cardinal preserving generic extension where $\mathfrak{c}(\kappa)= \kappa^{++}$ and $Spec(\mathfrak{t}(\kappa)) = \{\kappa^+\}$ hold.
\end{cor*}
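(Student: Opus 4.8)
The plan is to realise the required model as the $<\kappa$-support linear iteration $\mathbb{P}=\langle\mathbb{P}_\alpha,\dot{\mathbb{Q}}_\alpha:\alpha<\kappa^{++}\rangle$ of the $\kappa$-Hechler forcing of length $\kappa^{++}$; equivalently $\mathbb{P}=D(\kappa,Q)$ for $Q$ the ordinal $\kappa^{++}$ with its linear order, so that both Theorem~\ref{thm_pres} and the no-long-tower result quoted above apply directly. First I would record the standard structural facts: under $\kappa^{<\kappa}=\kappa$ and GCH, $\mathbb{P}$ is $<\kappa$-closed and $\kappa^+$-c.c. (the latter by the $\Delta$-system lemma applied to the $<\kappa$-sized supports together with $|\kappa\text{-Hechler}|\le\kappa^+$), hence it preserves all cardinals and cofinalities; moreover $|\mathbb{P}|=\kappa^{++}$, so a nice-name count yields $2^\kappa\le\kappa^{++}$ in $V^{\mathbb{P}}$, while the $\kappa^{++}$ Hechler reals added at successive stages are pairwise distinct and give $2^\kappa\ge\kappa^{++}$. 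Thus $\mathfrak{c}(\kappa)=2^\kappa=\kappa^{++}$ in the extension, and it remains to pin down $Spec(\mathfrak{t}(\kappa))$.

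For the upper bound $Spec(\mathfrak{t}(\kappa))\subseteq\{\kappa^+\}$ I would use two elementary $\ZFC$ facts valid for $\kappa$ regular: (i) every $\subseteq^*$-decreasing sequence in $[\kappa]^\kappa$ of length $\le\kappa$ has a pseudo-intersection (choose a strictly increasing diagonal $x_\xi\in\bigcap_{\eta\le\xi}a_\eta$, each such intersection being of size $\kappa$ by regularity of $\kappa$); and (ii) a strictly $\subseteq^*$-decreasing sequence has length $\le 2^\kappa$. Since a pseudo-intersection of a cofinal subsequence of a $\subseteq^*$-decreasing sequence is a pseudo-intersection of the whole sequence, passing to a cofinal subsequence of order type $\cf(\lambda)$ shows that any $\kappa$-tower of length $\lambda$ yields one of length $\cf(\lambda)$, and by (i) we must have $\cf(\lambda)>\kappa$. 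Hence in $V^{\mathbb{P}}$ the relevant tower lengths have cofinality a regular cardinal in $[\kappa^+,2^\kappa]=\{\kappa^+,\kappa^{++}\}$, and it suffices to exclude $\kappa^{++}$ — which is precisely the quoted result that the length-$\kappa^{++}$ $<\kappa$-support iteration of $\kappa$-Hechler forcing adds no $\kappa$-tower of length $\kappa^{++}$ (its hypothesis $\lambda=\kappa^{++}>\kappa^+$ being met).

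For the lower bound $\kappa^+\in Spec(\mathfrak{t}(\kappa))$ I would build the witness in $V$ and transport it. In $V$, where $2^\kappa=\kappa^+$, a routine diagonalization of length $\kappa^+$ produces a $\kappa$-tower $\tau=\langle a_\xi:\xi<\kappa^+\rangle$: enumerate $[\kappa]^\kappa$ in order type $\kappa^+$, and at stage $\xi$ first take a $\subseteq^*$-lower bound of $\langle a_\eta:\eta<\xi\rangle$ (available by (i), as $|\xi|\le\kappa$) and then shrink it so as to avoid the $\xi$-th candidate pseudo-intersection. The key observation is that $\tau$ is also \emph{eventually narrow}: if $a\in[\kappa]^\kappa$ satisfied $a\subseteq^* a_\eta$ for cofinally many $\eta$, then by $\subseteq^*$-decreasingness $a\subseteq^* a_\eta$ for \emph{all} $\eta$, i.e. $a$ would be a pseudo-intersection of $\tau$; conversely the same computation shows that an eventually narrow $\subseteq^*$-decreasing sequence has no pseudo-intersection. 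By Theorem~\ref{thm_pres}, applied with $\lambda=\kappa^+$ (so $\cf(\lambda)=\kappa^+>\kappa$), $\tau$ remains eventually narrow in $V^{\mathbb{P}}=V^{D(\kappa,Q)}$; as $\subseteq^*$-decreasingness is absolute, $\tau$ is still a $\kappa$-tower there, of length $\kappa^+$. Together with the upper bound this gives $Spec(\mathfrak{t}(\kappa))=\{\kappa^+\}$ and $\mathfrak{c}(\kappa)=\kappa^{++}$ in the cardinal-preserving extension $V^{\mathbb{P}}$.

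The genuinely hard ingredient — the absence of $\kappa$-towers of length $\kappa^{++}$ in the iteration — has already been established, so within the proof of the corollary itself the only delicate point is the interface between the notions ``$\kappa$-tower'' and ``$\kappa$-eventually narrow sequence'': checking that for $\subseteq^*$-decreasing sequences the two agree in the direction needed, so that Theorem~\ref{thm_pres} can be invoked to carry the ground-model length-$\kappa^+$ tower into $V^{\mathbb{P}}$ without it acquiring a new pseudo-intersection. Everything else (cardinal preservation, the computation of $2^\kappa$, and the observation that regular cardinals in $[\kappa^+,2^\kappa]$ are exactly $\kappa^+$ and $\kappa^{++}$) is routine bookkeeping.
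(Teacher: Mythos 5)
Your proposal is correct and follows essentially the same route as the paper: take $\lambda=\kappa^{++}$ in Theorem~\ref{notower} to rule out $\kappa$-towers of (regular) length $\kappa^{++}$, and compute $\mathfrak{c}(\kappa)=\kappa^{++}$ from the added Hechler generics together with a nice-name count. The one point where you are more explicit than the paper is the membership $\kappa^+\in Spec(\mathfrak{t}(\kappa))$, which you secure by building a length-$\kappa^+$ tower in $V$ under GCH and transporting it via Theorem~\ref{thm_pres} using the equivalence, for $\subseteq^*$-decreasing sequences, of ``eventually narrow'' and ``has no pseudo-intersection''; the paper records exactly this mechanism in the remark preceding Theorem~\ref{notower} and leaves it implicit in the corollary's proof.
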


Controlling $\mathfrak{s}(\kappa)$ strictly above $\kappa^+$ simultaneously with $\mathfrak{b}(\kappa),\mathfrak{d}(\kappa)$ and $2^\kappa$ remains an interesting open question. 
For a model of $\aleph_1 < \mathfrak{s} < \mathfrak{b}=\mathfrak{d} < \mathfrak{c}(=\mathfrak{a})$ see~\cite{VFDM}.

\section{Preliminaries}\label{prelim}

Now we recall some preliminaries and definitions.
\begin{defi}\label{def_s}
Let $\kappa$ be regular. Let $a$ and $b$ be elements in $[\kappa]^\kappa$.
\begin{enumerate}

\item Then $a \subseteq^* b$  holds, if $\vert a \setminus b \vert < \kappa$.

\item Further for $a, b\in [\kappa]^\kappa$, we say $a$ splits $b$ if $\vert b \setminus a \vert = \vert a \cap b \vert = \kappa$.

\item A family $\mathcal{S} \subseteq [\kappa]^\kappa$ is splitting if $\forall b \in [\kappa]^\kappa$ $\exists a \in \mathcal{S}$ such that $a$ splits $b$.

\item Finally $\mathfrak{s}(\kappa)$ denotes the generalized splitting number:

\begin{center}
$\mathfrak{s}(\kappa) = min \{ \vert \mathcal{S}\vert : \mathcal{S} \subseteq [\kappa]^\kappa, \mathcal{S}$ is $splitting\}$.
\end{center}
\end{enumerate}
\end{defi}

\begin{defi}\label{def_bd}
Let $\kappa$ be regular and let $f$ and $g$ be functions from $\kappa$ to $\kappa$, i.e. $f, g \in {}^{\kappa}\kappa$.

\begin{enumerate}

\item Then $g$ eventually dominates $f$, denoted by $f <^* g$, if $\exists \alpha < \kappa$ $\forall \beta > \alpha$ $f(\beta) < g(\beta)$.

\item A family $\mathcal{F} \subseteq {}^{\kappa}\kappa$, is dominating if $\forall g \in {}^{\kappa}\kappa $ $\exists f \in \mathcal{F}$ such that $g<^*f$.

\item A family $\mathcal{F} \subseteq {}^{\kappa}\kappa$ is unbounded if $\forall g \in {}^{\kappa}\kappa$ $\exists f \in \mathcal{F}$ such that $f \not<^*g$.

\item $\mathfrak{b}(\kappa)$ and $\mathfrak{d}(\kappa)$ denote the generalized bounding and dominating numbers respectively:

\begin{center}
$\mathfrak{b}(\kappa) = min \{ \vert \mathcal{F}\vert : \mathcal{F} \subseteq {}^{\kappa}\kappa, \mathcal{F}$ is unbounded$\}$

$\mathfrak{d}(\kappa) = min \{ \vert \mathcal{F}\vert : \mathcal{F} \subseteq {}^{\kappa}\kappa, \mathcal{F}$ is dominating$\}$.
\end{center}

\item Finally $\mathfrak{c}(\kappa) = 2^\kappa$.
\end{enumerate}
\end{defi}

In \cite{cmmngs} it is shown that $\kappa^+ \leq \mathfrak{b}(\kappa) = cf(\mathfrak{b}(\kappa)) \leq cf(\mathfrak{d}(\kappa)) \leq \mathfrak{d}(\kappa) \leq \mathfrak{c}(\kappa)$ holds.

More generally one defines bounding and dominating for arbitrary posets as follows: 

\begin{defi}[\cite{cmmngs}]
Let $(P, \leq_P)$ be a partial order. 

\begin{enumerate}

\item We call $U \subseteq P$ unbounded if $\forall p \in P \exists q \in U q \not\leq_P p$.

\item $\mathfrak{b}(P) = min \{ \vert U \vert : U \subseteq P$, $U$ is unbounded$\}$.

\item A subset $D \subseteq P$ is dominating if $\forall p \in P \exists q \in D p \leq_P q$.

\item $\mathfrak{d}(P) = min \{ \vert D \vert : D \subseteq P$, $D$ is dominating$\}$.
\end{enumerate}
\end{defi}

As mentioned above it is known that the triple $(\mathfrak{b}(\kappa), \mathfrak{d}(\kappa), \mathfrak{c}(\kappa))$ can be anything not contradicting the known results in ZFC:

\begin{thm}[\cite{cmmngs}]
(GCH at and above $\kappa$, $\kappa^{<\kappa} = \kappa$) Suppose $\beta, \delta, \mu$ are cardinals satisfying $\kappa^+ \leq \beta = cf(\beta) \leq cf(\delta) \leq \delta \leq \mu$ and $cf(\mu) > \kappa$. Then there is a cardinal preserving forcing notion $M(\kappa, \beta, \delta, \mu)$ such that $V^{M(\kappa, \beta, \delta, \mu)} \vDash \mathfrak{b}(\kappa) = \beta \land \mathfrak{d}(\kappa) = \delta \land \mu = \mathfrak{c}(\kappa)$.
\end{thm}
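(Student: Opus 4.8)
The plan is to realise $M(\kappa,\beta,\delta,\mu)$ as the Cummings--Shelah non-linear iteration $D(\kappa,Q)$ of the higher Hechler forcing over a partial order $Q$ whose \emph{own} invariants are already the prescribed ones: $|Q|=\mu$, $\mathfrak{b}(Q)=\beta$ and $\mathfrak{d}(Q)=\delta$. The reduction rests on the generalised Hechler phenomenon: in $V^{D(\kappa,Q)}$ the assignment $q\mapsto d_q$ of a node $q\in Q$ to its Hechler-generic real is order preserving, has range cofinal in $({}^{\kappa}\kappa,\le^{*})$, and is order reflecting, meaning $d_q\le^{*}d_{q'}\Rightarrow q\le_Q q'$. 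A routine transfer argument then gives $\mathfrak{b}(\kappa)=\mathfrak{b}(Q)=\beta$ and $\mathfrak{d}(\kappa)=\mathfrak{d}(Q)=\delta$ (unbounded, resp.\ cofinal, subsets on each side correspond, using cofinality of the range for one direction and order reflection for the other), while the $\mu$ many pairwise distinct reals $d_q$ force $\mathfrak{c}(\kappa)\ge\mu$.

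For the choice of $Q$, set $L:=([\delta]^{<\beta},\subseteq)$. Using GCH and $\beta=\cf(\beta)\le\cf(\delta)\le\delta$ one checks $|L|=\delta$, $\mathfrak{b}(L)=\beta$ (a subfamily of $L$ is unbounded iff its union has size $\ge\beta$, and a union of $<\beta$ sets of size $<\beta$ has size $<\beta$ by regularity of $\beta$) and $\mathfrak{d}(L)=\cf([\delta]^{<\beta})=\delta$. To widen $L$ to cardinality $\mu$ without changing these, put
\[
Q:=L\times\mu,\qquad (x,i)\le_Q(x',i')\ :\Longleftrightarrow\ x\subseteq x'\ \text{and}\ \bigl(x\subsetneq x'\ \text{or}\ i=i'\bigr),
\]
so that each $\subseteq$-level of $L$ becomes an antichain of size $\mu$ while passing one level up in $L$ dominates the entire width. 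Then $|Q|=\delta\cdot\mu=\mu$; the projection $Q\to L$ carries unbounded sets to unbounded sets, cofinal sets to cofinal sets, and conversely, so $\mathfrak{b}(Q)=\mathfrak{b}(L)=\beta$ and $\mathfrak{d}(Q)=\mathfrak{d}(L)=\delta$; moreover $Q$ is $\beta$-directed (in particular $\kappa^{+}$-directed, as $\kappa<\beta$), while the $\beta$ basic nodes $\langle(\{\xi\},0):\xi<\beta\rangle$ form an unbounded set no subfamily of which of size $<\beta$ is unbounded.

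The iteration bookkeeping is standard. As the higher Hechler forcing is $<\kappa$-closed and $D(\kappa,Q)$ carries $<\kappa$-supports over $Q$, the iteration is $<\kappa$-closed by $\kappa^{<\kappa}=\kappa$, hence adds no new $<\kappa$-sequences and preserves cardinals and cofinalities $\le\kappa$. For the $\kappa^{+}$-chain condition one runs the usual $\Delta$-system argument: $\kappa^{<\kappa}=\kappa$ and $2^{<\kappa}=\kappa$ bound by $\kappa$ the number of possible shapes of a condition restricted to a fixed $<\kappa$-sized set of nodes, so among any $\kappa^{+}$ conditions there are $\kappa^{+}$ many whose supports form a $\Delta$-system and which agree on the Hechler stems along the common root, hence are pairwise compatible; thus $D(\kappa,Q)$ preserves cardinals and cofinalities $\ge\kappa^{+}$ as well, i.e.\ it is cardinal preserving. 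Finally $|D(\kappa,Q)|=\mu$ by GCH, $|Q|=\mu$ and $\mu^{<\kappa}=\mu$ (as $\cf(\mu)>\kappa$), so $2^{\kappa}\le\mu^{\kappa}=\mu$ (again GCH and $\cf(\mu)>\kappa$); with the lower bound above this yields $\mathfrak{c}(\kappa)=\mu$.

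What remains --- and this is, I expect, the real work, since it is immediate for linear iterations --- is to prove the two lemmas that yield the three properties of the embedding used in the first paragraph. \emph{(Domination.)} For each $q\in Q$, $d_q$ eventually dominates every member of ${}^{\kappa}\kappa\cap V[G\restriction(Q\restriction q)]$, with $Q\restriction q:=\{q':q'<_Q q\}$; this is encoded in the ordering of $D(\kappa,Q)$ --- any extension of a condition at $q$ must make the freshly specified stem values exceed the values named at any $q'<_Q q$ already in the support --- but proving the global version requires the factorisation $D(\kappa,Q)\cong D(\kappa,Q\restriction q)\ast\dot{\mathbb{H}}(\kappa)\ast(\text{quotient})$ and a density argument in the first two factors. \emph{(Faithfulness.)} If $q\not\le_Q q'$ then $d_q\not\le^{*}d_{q'}$; it suffices that whenever $q_0$ lies outside the downward closure of a $<\kappa$-sized set $S$ of nodes, $d_{q_0}$ is unbounded over $V[G\restriction\bigcup_{s\in S}(Q\restriction s)]$, which again reduces to the factorisation. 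Given these two lemmas and $\kappa^{+}$-directedness of $Q$ (which makes $\{d_q:q\in Q\}$ cofinal in $({}^{\kappa}\kappa,\le^{*})$ via nice names, each supported on $<\kappa$ nodes and hence below a single upper bound), the map $q\mapsto d_q$ has all three required properties, and the transfer argument delivers $\mathfrak{b}(\kappa)=\beta$ and $\mathfrak{d}(\kappa)=\delta$; e.g.\ $\mathfrak{b}(\kappa)\le\beta$ because if $g$ dominated all $\beta$ basic generics then $g\le^{*}d_{q^{*}}$ for some $q^{*}$, whence faithfulness forces $(\{\xi\},0)\le_Q q^{*}$ for every $\xi<\beta$, contradicting that the basic nodes are unbounded in $Q$. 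The genuine obstacle is thus the commutativity/factorisation underlying both lemmas: that $G\restriction Q'$ is $D(\kappa,Q')$-generic for every downward-closed $Q'\subseteq Q$ and that the corresponding quotient forcing preserves the relevant (un)boundedness.
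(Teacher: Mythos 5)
The paper does not actually prove this statement---it is imported from \cite{cmmngs}---so the honest comparison is against the Cummings--Shelah construction, which the paper reuses in the proof of Theorem~\ref{t1}. Your architecture (realize $M(\kappa,\beta,\delta,\mu)$ as $D(\kappa,Q)$ for a $Q$ with $|Q|=\mu$, $\mathfrak{b}(Q)=\beta$, $\mathfrak{d}(Q)=\delta$, and transfer $\mathfrak{b}$ and $\mathfrak{d}$ along $q\mapsto d_q$ via domination, cofinality of the range, and order reflection) is exactly that strategy, and your verification of $\mathfrak{b}(L)=\beta$, $\mathfrak{d}(L)=\delta$ for $L=([\delta]^{<\beta},\subseteq)$ under GCH and $\beta\le\cf(\delta)$ is correct. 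But there is a concrete defect in the choice of $Q$: by Definition~\ref{def_CS}, $D(\kappa,Q)$ is built by recursion on a \emph{well-founded} poset, and your $Q=L\times\mu$ is ill-founded---already $L$ contains the strictly decreasing chain $\langle \omega\setminus n : n<\omega\rangle$ of sets of size $\aleph_0<\kappa^+\le\beta$. So the forcing you name is not defined. The standard repair, passing to a cofinal well-founded subset (which preserves $\mathfrak{b}$ and $\mathfrak{d}$), destroys your device for inflating $|Q|$ to $\mu$: since $(x,i)\le_Q(x',0)$ whenever $x\subsetneq x'$, the set $\{(x,0):x\in L\}$ is already cofinal, so a cofinal well-founded subset can have size $|L|=\delta$, which may be $<\mu$, and the lower bound $\mathfrak{c}(\kappa)\ge\mu$ is lost. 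What is actually needed is a well-founded poset with a designated size-$\mu$ piece at the bottom that survives because it is \emph{not} required to be cofinal---precisely the poset $R$ of Theorem~\ref{t1}, a copy of $(\mu,\in)$ placed below a cofinal well-founded copy of a poset with the right $\mathfrak{b}$ and $\mathfrak{d}$. Your construction is repairable along those lines, but as written it does not type-check.

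The second, larger issue is one you flag yourself: the two lemmas carrying the entire argument---that $d_q$ dominates ${}^{\kappa}\kappa\cap V[G\restriction q\downarrow]$, and that $q\not\le_Q q'$ implies $d_q\not\le^{*}d_{q'}$---are stated but not proved. These require the restriction/quotient analysis of $D(\kappa,Q)$ over downward-closed subsets together with a genuine preservation argument for the quotient (the faithfulness direction is delicate precisely because $q\downarrow$ and $q'\downarrow$ overlap), and they constitute essentially all of the work in \cite{cmmngs}. Identifying them as the crux is to your credit, but without them the proposal is a correct reduction of the theorem to its hard core, not a proof. The remaining scaffolding ($<\kappa$-closure, the $\Delta$-system argument for the $\kappa^{+}$-c.c., the nice-name count giving $2^{\kappa}\le\mu$, and the routine transfer of $\mathfrak{b}$ and $\mathfrak{d}$ once the three properties of $q\mapsto d_q$ are granted) is sound.
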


Although the exact definition a strong unfoldable cardinal is not strictly necessary to understand the results of this article we state it for the sake of completeness. First if $\kappa$ is strongly inaccessible, then a $\kappa$-model denotes a transitive structure $M$ of size $\kappa$, such that $M \vDash ZFC-P$, $\kappa \in M$ and $M^{<\kappa} \subseteq M$, i.e. $M$ is closed under building sequences of size less than $\kappa$.

\begin{defi}[\cite{VA}, \cite{J}] $ $

\begin{enumerate}

\item Let $\lambda$ be an ordinal. A cardinal $\kappa$ is $\lambda$-strongly unfoldable iff 
\begin{enumerate}
\item $\kappa$ is strongly inaccessible
\item for every $\kappa$-model $M$ there is an elementary embedding $j: M \to N$ with critical point $\kappa$ such that $\lambda < j(\kappa)$ and $V_\kappa \subseteq N$.
\end{enumerate}

\item A cardinal $\kappa$ is called strongly unfoldable if it is $\theta$-strongly unfoldable for every ordinal $\theta$.

\end{enumerate}
\end{defi}

A strongly unfoldable cardinal is in particular weakly compact. In the next section we will also use T. A. Johnstone's theorem concerning the indestructiblity of strongly unfoldable cardinals. 

\begin{thm}[\cite{J}]
Let $\kappa$ be strongly unfoldable. Then there is a set forcing extension where the strong unfoldability of $\kappa$ is indestructible by forcing notions of any size which are $<\kappa$-closed and have $\kappa^+$-c.c..
\end{thm}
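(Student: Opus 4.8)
The plan is to prove this via a \emph{lottery preparation} in the style of Hamkins, adapted to strongly unfoldable cardinals following Johnstone. First I would establish that a strongly unfoldable $\kappa$ carries a \emph{Menas function} $f\colon\kappa\to V_\kappa$, i.e.\ a partial function such that for every ordinal $\lambda$ and every $\kappa$-model $M$ with $f\in M$ there is a strong unfoldability embedding $j\colon M\to N$ with $\crt(j)=\kappa$, $j(\kappa)>\lambda$ and $j(f)(\kappa)$ as large as desired; such a function is obtained by a standard diagonal construction from strong unfoldability. I would then define $\mathbb{P}$ to be the Easton-support iteration of length $\kappa$ that, at each stage $\gamma$ in $\dom(f)$ (with $\gamma$ inaccessible and $f\restriction\gamma\in V_\gamma$), forces with the lottery sum of all $<\gamma$-closed, $\gamma^+$-c.c.\ posets in $H_{f(\gamma)^+}$, and forces trivially elsewhere. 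A routine support/$\Delta$-system argument shows $\mathbb{P}$ is $\kappa$-c.c.\ and preserves both cardinals and the strong unfoldability of $\kappa$; genuine indestructibility by the target class is the content of the lifting argument below.

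For indestructibility, fix in $V^{\mathbb{P}}=V[G]$ a $<\kappa$-closed, $\kappa^+$-c.c.\ poset $\mathbb{Q}$ with generic $H$, an ordinal $\lambda$, and a $\kappa$-model $M^*\in V[G][H]$; I must produce an embedding witnessing $\lambda$-strong unfoldability of $\kappa$ in $V[G][H]$. The first move is to \emph{reduce $\mathbb{Q}$ to size $\kappa$}. Choosing $\theta$ large and $X\prec H_\theta^{V[G]}$ with $|X|=\kappa$, $X^{<\kappa}\subseteq X$, $\kappa+1\subseteq X$ and $\{\mathbb{Q},G,M^*,\dots\}\in X$, let $\pi\colon X\to\bar M[G]$ be the transitive collapse and $\bar{\mathbb{Q}}=\pi(\mathbb{Q})$. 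The crucial point, and the one place the $\kappa^+$-c.c.\ of $\mathbb{Q}$ enters, is that every maximal antichain $A\in X$ of $\mathbb{Q}$ has $|A|\le\kappa$ and, since $\kappa\subseteq X$, is therefore \emph{contained} in $X$; hence $\mathbb{Q}\cap X$ is a regular subposet of $\mathbb{Q}$ and $\bar H:=\pi[H\cap X]$ is $\bar M[G]$-generic for the size-$\kappa$, $<\kappa$-closed poset $\bar{\mathbb{Q}}$.

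Next I would run the lift over the ground-model $\kappa$-model $\bar M$. Using the Menas property, together with the standard improvement that the target $N$ may be taken with $V_\lambda\subseteq N$ (so that $\bar M\in N$), choose $j\colon\bar M\to N$ with $\crt(j)=\kappa$, $j(\kappa)>\lambda$ and $j(f)(\kappa)$ large enough that $\bar{\mathbb{Q}}$ appears as an option in the stage-$\kappa$ lottery of $j(\mathbb{P})$. Factoring $j(\mathbb{P})=\mathbb{P}*\dot{\bar{\mathbb{Q}}}*\mathbb{P}_{\mathrm{tail}}$ by opting for $\bar{\mathbb{Q}}$, I would lift $j$ in two steps. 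Through $\mathbb{P}$: since $\crt(j)=\kappa$ one has $j[G]=G$ below $\kappa$, and the required $N$-generic for the $\le\kappa$-closed tail $\mathbb{P}_{\mathrm{tail}}$ is built by a diagonalization of length $\kappa$ through its at most $|N|=\kappa$ dense sets, carried out inside $V[G][\bar H]$. Through $\bar{\mathbb{Q}}$: the image $j[\bar H]$ is a directed set of size $\le\kappa<j(\kappa)$ in the $<j(\kappa)$-closed poset $j(\bar{\mathbb{Q}})$, hence has a master condition, below which a generic is again produced by diagonalization. This yields $j\colon\bar M[G][\bar H]\to N^*$ with $\crt(j)=\kappa$, $j(\kappa)>\lambda$ and $V_\kappa^{V[G][H]}\subseteq N^*$, the last because the $<\kappa$-closed forcings add no new $<\kappa$-sequences and so leave $V_\kappa$ unchanged. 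As $X$ was chosen to contain $M^*$, restricting $j$ to $M^*$ gives the desired witness.

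The hard part is the reduction of the second paragraph together with the two genericity constructions of the third. One must verify that $\mathbb{Q}\cap X$ is genuinely regular in $\mathbb{Q}$, so that $\bar H$ really is $\bar M[G]$-generic, and that the diagonal constructions of the tail generic and of the generic below the master condition can be carried out \emph{in $V[G][H]$} rather than merely inside $N$; this rests on $N$ having size $\kappa$ (hence only $\kappa$ dense sets) and on the high closure of the image forcings. The role division of the hypotheses is what makes the argument work: the $\kappa^+$-c.c.\ is exactly what allows an arbitrarily large $\mathbb{Q}$ to be captured by a size-$\kappa$ elementary submodel, while the $<\kappa$-closure both preserves $V_\kappa$ and supplies the master condition for $j[\bar H]$.
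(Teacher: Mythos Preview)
The paper does not prove this theorem; it is quoted from Johnstone \cite{J} and used as a black box in Theorem~\ref{t1}. So there is no in-paper proof to compare your sketch against, and it must be judged against Johnstone's original.

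Your overall architecture---lottery preparation relative to a Menas function, reduction of $\mathbb{Q}$ to a size-$\kappa$ suborder via an elementary submodel (this is precisely where the $\kappa^+$-c.c.\ is spent), and a two-stage lift---is indeed Johnstone's strategy. But the lift as you describe it does not go through. You claim the $N$-generic for $\mathbb{P}_{\mathrm{tail}}$ is built ``by a diagonalization of length $\kappa$ through its at most $|N|=\kappa$ dense sets.'' This is impossible: $N$ is transitive and contains the ordinal $j(\kappa)>\lambda$, hence $j(\kappa)\subseteq N$ and $|N|\ge |j(\kappa)|>\kappa$ as soon as $\lambda\ge\kappa^+$. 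Worse, strong unfoldability in the standard sense requires $V_\lambda\subseteq N$ (the paper's ``$V_\kappa\subseteq N$'' in the stated definition is a slip; compare \cite{VA}, \cite{J}), which forces $|N|\ge\beth_\lambda$. A length-$\kappa$ diagonalization against a merely $\le\kappa$-closed tail therefore cannot meet all $N$-dense sets. Johnstone overcomes this with a sharper choice of the ground-model embedding---bounding $|N|$ while simultaneously securing closure of $N$ under the relevant sequences---together with an analysis showing that the tail is closed well beyond $\kappa$; this is the technical core of his proof and cannot be waved through. Relatedly, your concluding condition $V_\kappa^{V[G][H]}\subseteq N^*$ is too weak: you need $V_\lambda^{V[G][H]}\subseteq N^*$, and establishing that requires exactly the refined control over $N$ that is missing from your sketch.

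A smaller slip: you place $M^*\in X\prec H_\theta^{V[G]}$, but $M^*$ lives in $V[G][H]$, not $V[G]$. What belongs in $X$ is a $\mathbb{Q}$-name $\dot M^*$; the interpretation of its collapse by $\bar H$ then recovers $M^*$ inside $\bar M[G][\bar H]$.
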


\begin{rem}
In Theorem \ref{t1} one could make the stronger assumption of a supercompact cardinal. Then Laver preparation can be used to make the supercompactness of $\kappa$ indestructible by the $\kappa$-directed closed forcing poset $D(\kappa, Q)$. The straight forward proof of the later assertion is given for the sake of completeness.  
\end{rem}

\begin{lem}\label{dir-clo}
$D(\kappa, Q)$ is $\kappa$-directed closed.
\end{lem}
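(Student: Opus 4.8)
The plan is to unwind the definition of the Cummings--Shelah poset $D(\kappa,Q)$ and check directed closure condition by condition. Since the exact definition of $D(\kappa,Q)$ (Definition~\ref{def_CS}) is not reproduced in the excerpt, I work from what the structure must be: a condition is a function $p$ whose domain is some $\leq\kappa$-sized subset of the index set $Q$, assigning to each node a condition in the higher Hechler forcing $\mathbb{H}(\kappa)$ (Definition~\ref{def_H}) together with the relevant side conditions recording names for the Hechler reals at lower nodes; the ordering is coordinatewise extension. First I would recall that a single copy of higher Hechler forcing $\mathbb{H}(\kappa)$ is itself $<\kappa$-closed: a condition is a pair $(s,F)$ with $s\in{}^{<\kappa}\kappa$ a stem and $F$ a $\kappa$-sized family of functions to be dominated, and a descending $<\kappa$-sequence of such conditions has a lower bound obtained by taking the union of the (end-extending) stems — which has length $<\kappa$ since $\kappa$ is regular — and the union of the domination requirements, which still has size $<\kappa\cdot\kappa=\kappa$. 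Here one uses $\kappa^{<\kappa}=\kappa$ and the regularity/inaccessibility of $\kappa$.

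Next I would lift this to the iteration. Let $\{p_i : i\in I\}$ be a directed family of conditions in $D(\kappa,Q)$ with $|I|<\kappa$. Set $q$ to have domain $\bigcup_{i\in I}\dom(p_i)$, which is a union of $<\kappa$ many sets each of size $\leq\kappa$, hence of size $\leq\kappa$, so it is a legitimate support. For each node $t$ in this union, the conditions $\{p_i(t) : i\in I,\ t\in\dom(p_i)\}$ form a directed (in fact, by directedness of $I$, downward-directed) family in the relevant coordinate forcing of size $<\kappa$; by the $<\kappa$-closure (equivalently $\kappa$-directed closure) of that coordinate forcing, established in the previous paragraph, this family has a lower bound, and one takes $q(t)$ to be (a name for) such a lower bound. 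The usual bookkeeping — that the side conditions/names recorded at a node are compatible along the directed family, and that the resulting $q$ genuinely extends each $p_i$ in the order of $D(\kappa,Q)$ — is routine given that compatibility was assumed. One should note that $\kappa$-directed closure, rather than mere $<\kappa$-closure, is exactly what is needed and what the non-linear index structure allows: directedness of $I$ is used to guarantee that at each node the coordinate conditions point the same way.

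The one genuine point requiring care — the main obstacle — is the non-linearity: unlike a linear iteration, the set of nodes appearing below a given node $t$ in $\dom(q)$ need not be well-ordered, so one must verify that the recursive construction of the name $q(t)$ is well-founded, i.e. that it proceeds by recursion along the partial order $Q$ restricted to the (set-sized) support, and that at each stage the relevant $D(\kappa,Q)\restriction(Q\text{ below }t)$-generic objects interpreting the earlier Hechler reals are forced to cohere. This is handled exactly as in the Cummings--Shelah construction by a simultaneous recursion on the rank of nodes in $Q\restriction\dom(q)$, using that this restricted partial order is set-sized and well-founded. I would then conclude that $q\in D(\kappa,Q)$ and $q\leq p_i$ for all $i\in I$, giving $\kappa$-directed closure. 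As a corollary one gets that $D(\kappa,Q)$ adds no new $<\kappa$-sequences and preserves all cardinals $\leq\kappa$, which is used freely later in the paper.
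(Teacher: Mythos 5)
Your proposal follows essentially the same route as the paper's proof: take the union of the $<\kappa$ many supports, at each node take the union of the (coherent, by directedness) stems together with a name for the pointwise supremum of the second coordinates, and verify the extension relation by induction on the $Q$-rank of the nodes. The only discrepancies are cosmetic: in Definition~\ref{def_CS} supports have size $<\kappa$ (not $\leq\kappa$), so the union of the domains has size $<\kappa$ by regularity of $\kappa$, and the second coordinate of a condition in $\mathbb{H}(\kappa)$ is a single increasing function rather than a family, so the lower bound is obtained by pointwise suprema rather than by unions of domination requirements.
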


\begin{proof}
Let $W:= \{p_\alpha: \alpha < \gamma\}$ be a directed set of conditions where $\gamma < \kappa$.  We define their common extension $p$ as follows: $dom(p) = \bigcup_{\alpha < \gamma}dom(p_\alpha)$ then $|dom(p)| < \kappa$ by regularity.
For any $\alpha < \gamma$ and $b \in dom(p_\alpha)$ let $p_\alpha(b) = (t_\alpha^b, \dot{F}_\alpha^b)$. Then we define $p(b) = (t_p^b, \dot{F}_p^b)$ where $t_p^b$ is the union of the stems (since $W$ is directed): $t_p^b = \bigcup\{t_\alpha^b: b \in dom(p_\alpha)\}$ and $\dot{F}_p^b$ is a $P_b$-name for the pointwise supremum of the second coordinates $\{\dot{F}_\alpha^b: b \in dom(p_\alpha)\}$.
Now it is easy to verify by an induction on the rank of $b \in dom(p)$ (its $Q$-rank) that $p$ is a common extension for $W$. 
\end{proof}

\section{Consistency of $\mathfrak{s}(\kappa) = \kappa^+ < \mathfrak{b}(\kappa) < \mathfrak{d}(\kappa) < \mathfrak{c}(\kappa)$}\label{sbdc}

In this section we want to obtain the consistency of $ \kappa^+ = \mathfrak{s}(\kappa) < \mathfrak{b}(\kappa) < \mathfrak{d}(\kappa) < \mathfrak{c}(\kappa)$ where $\kappa$ is strongly unfoldable and $\mathfrak{b}(\kappa), \mathfrak{d}(\kappa)$ and $\mathfrak{c}(\kappa)$ can be arbitrary uncountable cardinals $\beta, \delta, \mu \geq \kappa^+$ not contradicting $\beta= cf(\beta) \leq cf(\delta) \leq \delta\leq \mu$ and $cf(\mu)>\kappa$. Unless otherwise specified, $\kappa$ is a regular uncountable cardinal.

Let $\kappa^{<\kappa} \uparrow: = \{s \in \kappa^{<\kappa}: s$ is strictly increasing$\}$ and ${}^{\kappa}\kappa \uparrow: = \{f \in {}^{\kappa}\kappa: f$ is strictly increasing$\}$. 

\begin{defi}\label{def_H}
The $\kappa$-Hechler poset is defined as the set $\mathbb{H}(\kappa) = \{(s,f): s \in \kappa^{<\kappa} \uparrow, f \in {}^\kappa \kappa \uparrow\}$ with extension relation given by $(t,g) \leq_{\mathbb{H}(\kappa)} (s,f)$ iff $$s \subseteq t \land \forall \alpha \in \kappa ~[g(\alpha)\geq f(\alpha)] \land \forall \alpha \in dom(t)\setminus dom(s)~ [t(\alpha) > f(\alpha)].$$
When $\kappa$ is clear from the context, we write just $\mathbb{H}$ instead of $\mathbb{H}(\kappa)$.
\end{defi}

Next we recall the definition of a non-linear forcing iteration $D(\kappa, Q)$ from \cite{cmmngs} with the additional assumption that the stems and second coordinates are strictly increasing. Note that the
poset is dense in the original one.

\begin{defi}\label{def_CS}
Let $(Q, \leq_{Q})$ be a well-founded poset such that $\kappa^+ \leq \mathfrak{b}((Q, \leq_{Q}))$. 
Extend $Q$ to a partial order $Q'=Q\cup\{m\}$ with a maximal element $m$.
Recursively on $Q'$, define for each $a\in Q'$ a forcing notion $P_a$ as follows:
\begin{itemize}
	\item  Fix $a\in Q'$ and suppose for each $b <_{Q'} a$ the poset $P_b$ has been defined. Then $P_a$ consists of functions $p$ such that $dom(p) \subseteq a\downarrow\; : = \{c \in Q': c <_{Q'} a\}$ and $|dom(p)| < \kappa$ and $\forall b \in dom(p)~ p(b) = (t, \dot{F})$ where $t \in \kappa^{<\kappa} \uparrow$ and $\dot{F}$ is a $P_b$-name for an element in  ${}^{\kappa}\kappa \uparrow$. 

    \item Thus for every $a \in Q'$ each $p \in P_a$ is of the form $p = ( \bar{p}_0, \bar{p}_1)$, where $\bar{p}_0 = \langle s(b) \rangle_{b \in dom(p)}$, $\bar{p}_1 = \langle \dot{F}_b \rangle_{b \in dom(p)}$ and each pair $(s(b), \dot{F}_b)$ is a $P_b$-name for a $\kappa$-Hechler condition. 

    \item For $p \in P_a$ and $c \in Q'$ let $p \upharpoonright c\downarrow = (\langle s(b)\rangle_{b \in dom(p) \cap c \downarrow}, \langle \dot{F}_b\rangle_{b \in dom(p) \cap c \downarrow})$.
\end{itemize}

The extension relation of $P_a$ is defined as follows: $p \leq q$ iff $dom(q) \subseteq dom(p)$ and
$$\forall b \in dom(q)~[ p \upharpoonright b \Vdash_{P_b} (\bar{p}_0(b), \bar{p}_1(b)) \leq_{\mathbb{H}(\kappa)} (\bar{q}_0(b), \bar{q}_1(b))].$$

Finally, let $D(\kappa, Q)= P_m$.
\end{defi}

\begin{rem}\label{rem_nonlin_it} $ $
\begin{enumerate}
\item The fact that $D(\kappa, Q)$ has the $\kappa^+$-c.c. and $\kappa$-closed is shown in \cite{cmmngs}.

\item If $\lambda$ is a regular uncountable cardinal and $(Q, \leq_Q)= (\lambda, \in)$, then $D(\kappa, Q) = D(\kappa, (\lambda,\in))$ is the $<\kappa$ support iteration of $\mathbb{H}(\kappa)$. Note also, that 
$\mathfrak{b}((\lambda, \in))= \mathfrak{d}((\lambda, \in)) = \lambda$.
\end{enumerate}
\end{rem}


Whenever $X$ and $Y$ are given sets, let\\
\centerline{$fin_{<\kappa}(X, Y)= \{f:~ f$ is a partial function from $X$ to $Y$, $|dom(f)|<\kappa\}$.}

\begin{defi}
Whenever $\bar{s} \in fin_{<\kappa}(Q, \kappa^{<\kappa} \uparrow)$, we denote by $  l_{\bar{s}} \in {}^{dom(\bar{s})}\kappa$ the lengths of the sequences in $\bar{s}$, i.e. $l_{\bar{s}} = \langle dom(\bar{s}(a))\rangle_{a \in dom(\bar{s})}$. 
\end{defi}

\begin{defi}\label{def_derivatives}
Let $D$ be open dense in $D(\kappa, Q)$, i.e. $\forall p \in D(\kappa, Q) \exists q \in D$ such that $q\leq p$ and whenever $p \in D$ and $q \leq p$ then $q \in D$. Define a sequence of subsets of $fin_{<\kappa}(Q, \kappa^{<\kappa} \uparrow)$, referred to as a sequence of derivatives, as follows:

\begin{enumerate}
\item\label{1} $D_0 = \{\bar{s} \in fin_{<\kappa}(Q, \kappa^{<\kappa} \uparrow) | ~ \exists p \in D ~[\bar{p}_0 = \bar{s}]\}$,

\item\label{2} $D_{\alpha +1} = \{\bar{s} \in fin_{<\kappa}(Q, \kappa^{<\kappa} \uparrow) |$
\begin{enumerate}
\item\label{2a} $\bar{s} \in D_\alpha~$, or

\item\label{2b} $\exists \bar{t} \in D_\alpha \exists ! a \in dom(\bar{t})\hbox{ such that } dom(\bar{s}) = dom(\bar{t}) \setminus \{a\} \land \bar{t} \upharpoonright dom(\bar{s}) = \bar{s}~$, or

\item\label{2c}
$\exists \bar{l} \in fin_{<\kappa}(Q, \kappa)~[  dom(\bar{l}) = dom(\bar{s})] \land \forall c \in dom(\bar{l})~[\bar{l}(c) \geq dom(\bar{s}(c))] \land \exists c \in dom(\bar{l})~[ \bar{l}(c) > dom(\bar{s}(c))]$ and $\exists \{\bar{t}_\beta: \beta < \kappa \} \subseteq D_\alpha \forall \beta < \kappa ~[ \bar{s} \subseteq  \bar{t}_\beta \land l_{\bar{t}_\beta \upharpoonright dom(\bar{l})} = \bar{l} \land \forall b \in dom(\bar{l})~[ \bar{t}_\beta(b)(dom(\bar{s}(b))) > \beta]] \}$, and 
\end{enumerate}

\item\label{3} $D_\alpha = \bigcup \{D_\beta \vert ~ \beta < \alpha \}$ if $\alpha$ is a limit ordinal.
\end{enumerate}
\end{defi}

Item (\ref{2b}) says that the hierarchy of derivatives is closed under shortening the domain, i.e. whenever a sequence (of sequences) $\bar{t}$ appears in the hierarchy and the sequence $\bar{s}$ is obtained from $\bar{t}$ only by forgetting points in the domain of $\bar{t}$, then $\bar{s}$ also appears in the hierarchy of derivatives (at a higher level). In item (\ref{2c}) first $\bar{l}$ fixes a sequence of lengths on a domain. Then for every $\beta \in \kappa$ a sequence of sequences $\bar{t}_\beta$ is found such that each one's domain contain the domain of $\bar{l}$ and on this domain the lengths of the sequences in $\bar{t}_\beta$ coincide with the lengths fixed by $\bar{l}$ (how each $\bar{t}_\beta$ behaves outside this domain $dom(\bar{l})$ doesn't matter). Further each sequence in $\bar{t}_\beta$ is an end-extension of the sequence in $\bar{s}$ at the same point and if the former is strictly longer than the latter, then it goes above any value on its new domain.

Due to (\ref{2a}) and (\ref{3}) this sequence is increasing, i.e. $D_\alpha \subseteq D_{\alpha +1}$. Consequently this increasing sequence of derivatives has to stabilize at some index below $|fin_{<\kappa}(Q, \kappa^{<\kappa} \uparrow)|^+$, that is there  exists $\gamma < |fin_{<\kappa}(Q, \kappa^{<\kappa} \uparrow)|^+$ such that $D_\gamma = D_{\gamma+1}$.

\begin{thm}\label{thm1}
Assume GCH, $\kappa^{<\kappa} = \kappa$ and $\kappa$ is strongly inaccessible. Let $\gamma$ be the least such that $D_\gamma = D_{\gamma+1}$. Then $D_\gamma = fin_{<\kappa}(Q, \kappa^{<\kappa} \uparrow)$.
\end{thm}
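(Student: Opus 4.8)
The plan is to show $D_\gamma = \fin_{<\kappa}(Q,\kappa^{<\kappa}\uparrow)$ by proving the contrapositive: if some $\bar s$ lies outside $D_\gamma$, then the set of conditions $p\in D(\kappa,Q)$ whose stem sequence $\bar p_0$ is incompatible (in the sense of not end-extending) with $\bar s$ — or more precisely the set of conditions that ``avoid'' $\bar s$ in a suitable way — would be dense, contradicting the density of $D$. Concretely, since $\gamma$ is least with $D_\gamma = D_{\gamma+1}$ and the sequence of derivatives is increasing and stabilizes, a sequence $\bar s\notin D_\gamma$ is in particular a ``permanent'' non-member: it satisfies none of the closure conditions (\ref{2a}), (\ref{2b}), (\ref{2c}) with respect to $D_\gamma$ itself. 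I would extract from this a combinatorial statement saying: no condition of $D$ has stem $\bar s$ (that's (\ref{1})/(\ref{2a}) failing); $\bar s$ is not obtained by deleting one point from a stem in $D_\gamma$ (that's (\ref{2b}) failing); and for no admissible length vector $\bar l$ can one find $\kappa$-many end-extensions of $\bar s$ realizing those lengths and pushing the new coordinates arbitrarily high (that's (\ref{2c}) failing).

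Next I would run a density/fusion argument inside $D(\kappa,Q)$. Start with an arbitrary $p\in D(\kappa,Q)$ and aim to build $q\le p$ with $q\in D$ such that $\bar q_0$ does \emph{not} contain $\bar s$ as a subsequence of end-extensions — this is what will contradict $\bar s \notin D_0 \subseteq D_\gamma$ once we trace back. The key point is that $D(\kappa,Q)$ is $\kappa$-closed and $\kappa^+$-c.c.\ (Remark~\ref{rem_nonlin_it}), and has the directed-closure from Lemma~\ref{dir-clo}, so one can do transfinite constructions of length $<\kappa$ and take limits. Working along the well-founded order $Q'$ by induction on $Q$-rank, I would, at each coordinate $b\in dom(\bar s)$, use the failure of condition (\ref{2c}) to find a bound $\beta_b<\kappa$ beyond which no extension in $D_\gamma$ can push coordinate $b$; then extend the stem at $b$ past $\beta_b$ (using the Hechler extension relation, which forces the new stem values above the name $\dot F_b$, and which we may further arrange to exceed $\beta_b$). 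Iterating this over the $<\kappa$-sized domain and amalgamating via directed closure produces a condition whose stems are ``too tall'' at every relevant coordinate to ever be an initial segment compatible with $\bar s$; refining inside the dense open $D$ and invoking the failure of (\ref{2b}) to rule out the degenerate case where one drops a coordinate, we get the desired $q\in D$, a contradiction.

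The main obstacle I anticipate is the non-linearity of the iteration: unlike the linear case of~\cite{drdl}, the coordinates of $D(\kappa,Q)$ are indexed by a well-founded poset, so the names $\dot F_b$ depend on the part of the condition below $b$ in $Q'$, and one cannot simply process coordinates one at a time in a linear order. The construction must respect $Q$-rank, and at a coordinate $b$ the relevant ``height'' bound $\beta_b$ obtained from the failure of (\ref{2c}) may depend on what has already been decided below $b$; keeping the bookkeeping coherent — so that the $<\kappa$-many choices can all be amalgamated into a single condition via Lemma~\ref{dir-clo} — is the delicate part. A secondary subtlety is verifying that the three closure clauses of Definition~\ref{def_derivatives} really are exactly the ones dualized by this density argument: clause (\ref{2b}) handles ``shortening the domain'', clause (\ref{2c}) handles ``growing a stem arbitrarily high at $\kappa$-many stages'', and one must check these suffice to capture every way a stem of a condition in $D$ could relate to $\bar s$. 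Once the density contradiction is in place the theorem follows, since $\bar s\notin D_\gamma$ was arbitrary and hence $D_\gamma$ must be all of $\fin_{<\kappa}(Q,\kappa^{<\kappa}\uparrow)$.
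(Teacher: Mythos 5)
Your overall orientation---a density argument built on the failure of the closure clauses at $\bar{s}$, exploiting the Hechler extension relation to push stems above bounds---points in the right direction, but there are two genuine gaps. First, the contradiction is set up backwards. You aim to build, below an \emph{arbitrary} $p$, a condition $q\in D$ whose stem ``avoids'' $\bar{s}$; but no contradiction with $\bar{s}\notin D_\gamma$ follows from producing stems that do not extend $\bar{s}$. The paper instead builds one \emph{specific} condition $p$ whose stem \emph{is} $\bar{s}$ and whose second coordinates $\dot{f}_a$ are chosen to dominate a certain bounded family; any $q\le p$ in $D$ then has $\bar{q}_0\in D_0$ end-extending $\bar{s}$, and the contradiction is extracted from the fact that between $\bar{s}$ and $\bar{q}_0\upharpoonright dom(\bar{s})$ there must sit a \emph{minimal extension} of $\bar{s}$ in $D_\gamma$ (minimal in the sense that every proper truncation back towards $\bar{s}$ leaves $D_\gamma$): either that minimal extension equals $\bar{s}$ (so $\bar{s}\in D_\gamma$, contradiction), or its new stem values lie below $\dot{f}_a$, contradicting the Hechler extension relation $q\le p$.

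Second, and more seriously, the bound you want to extract from the failure of clause (\ref{2c}) does not exist in the form you state. Negating (\ref{2c}) at $\bar{s}$ for a fixed length vector $\bar{l}$ only yields some $\beta_0<\kappa$ such that every $\bar{t}\in D_\gamma$ extending $\bar{s}$ with lengths $\bar{l}$ has \emph{some} coordinate $b$ with $\bar{t}(b)(dom(\bar{s}(b)))\le\beta_0$; it does not bound the values of all extensions at a given coordinate, so your $\beta_b$ is not available. The missing device is the counting claim at the heart of the paper's proof: for each length vector $\bar{l}$ there are fewer than $\kappa$ many \emph{minimal} extensions of $\bar{s}$ with lengths $\bar{l}$. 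Its proof applies clause (\ref{2c}) not at $\bar{s}$ but at intermediate truncations (if $\kappa$ many minimal extensions existed, one finds a coordinate $a$, a least position $\alpha$ with unbounded values, and $\kappa$ many of them agreeing below $\alpha$; the truncation at $\alpha$ then lands in $D_{\gamma+1}=D_\gamma$, contradicting minimality). Only this claim makes the families of values bounded, which is what permits the construction of the dominating functions $g_a$ and $g$ (handled separately according to whether the maximal new length is a successor or a limit) that become the second coordinates of $p$. Finally, the fusion-along-$Q$-rank machinery you anticipate is not needed: the argument is a single-step density argument with one condition, and the non-linearity of $Q$ plays essentially no role beyond bookkeeping of domains.
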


\begin{proof}
Suppose not and let $\bar{s} \in fin_{<\kappa}(Q, \kappa^{<\kappa} \uparrow)\setminus D_\gamma$. For the purposes of this proof, we will use the following notion:

\begin{defi*}
A sequence $\bar{t} \in D_\gamma$ is said to be a minimal extension of $\bar{s}$ if  $dom(\bar{t}) = dom(\bar{s}), \bar{s} \subseteq \bar{t}$ and whenever $ \bar{l} \in {}^{dom(\bar{s})} \kappa$ is such that $l_{\bar{s}} \leq \bar{l} \leq l_{\bar{t}}$ pointwise and $\exists c \in dom(\bar{s})~[ \bar{l}(c) < l_{\bar{t}}(c))]$ then $ \bar{t} \upharpoonright \bar{l}\not\in D_\gamma$, where $\bar{t} \upharpoonright \bar{l} = \langle \bar{t}(a) \upharpoonright \bar{l}(a): a \in dom(\bar{t}) \rangle$.
\end{defi*}

 For the first let us claim, that for given lengths on $dom(\bar{s})$, there are less than $\kappa$ many minimal extensions with these lengths:

\begin{clm*}
For every $\bar{l} \in {}^{dom(\bar{s})} \kappa$ we have that $|T_{\bar{l}} | < \kappa$ where 

\centerline{$T_{\bar{l}} := \{ \bar{t}~:~ \bar{t}$ is a minimal extension of $\bar{s}$ with $l_{\bar{t}} = \bar{l} \}$.}
\end{clm*}

\begin{proof}[Proof of the Claim]
Suppose not and let $\bar{l}\in^{dom(\bar{s})}\kappa$ be such that $|T_{\bar{l}}|\geq\kappa$. Then $|T_{\bar{l}}|=\kappa$, as  $T_{\bar{l}} \subseteq {}^{dom(\bar{s})}(\kappa^{<\kappa} \uparrow)$, $| \kappa^{<\kappa} \uparrow| = \kappa$ and $|dom(\bar{s})| < \kappa$.
	For each $a\in dom(\bar{l})$ and each $\bar{t}\in T_{\bar{l}}$ let
	$$\rho_a(\bar{t})=\sup\{ \bar{t}(a)(\alpha):\alpha <\bar{l}(a)\}$$
	and let $\rho_a=\sup\{\rho_a(\bar{t}): \bar{t}\in T_{\bar{l}}\}$. 
If for each $a\in dom(\bar{s})$, $\rho_a <\kappa$, then $|T_{\bar{l}}|<\kappa$. This is due to $\kappa^{<\kappa} = \kappa$, the regularity of $\kappa$, $|T_{\bar{l}}| = \kappa$ and the inaccessibility of $\kappa$. Thus, there is $a\in dom(\bar{s})$ such that $\rho_a=\kappa$. Now, if for each $\alpha <\bar{l}(a)$, $\mu_\alpha:=\sup\{\bar{t}(a)(\alpha):\bar{t}\in T_{\bar{l}}\}<\kappa$, 
then $\rho_a=\sup_{\alpha<\bar{l}(a)} \mu_\alpha < \kappa$, which is a contradiction. Therefore, there is $\alpha<\bar{l}(a)$ such that $\mu_\alpha=\kappa$. Pick $\alpha$ least such that $\mu_\alpha=\kappa$. 
Then in particular, $|\{\bar{t}(a)\upharpoonright\alpha:\bar{t}\in T_{\bar{l}}\}|<\kappa$ and so we can find $u \in {^{<\kappa}\kappa}\uparrow$ and $T'\subseteq T_{\bar{l}}$ of cardinality $\kappa$ such that for each $\bar{t}\in T'$, $\bar{t}(a)\upharpoonright \alpha=u$ and $\{\bar{t}(a)(\alpha): \bar{t}\in T'\}$ is unbounded in $\kappa$. Fix $\bar{t}\in T'$.  Then $\bar{s}'\in{^{dom(\bar{s})} (\kappa^{<\kappa}\uparrow)}$ where $\bar{s}'(a)=u$ and $\bar{s}'(b)=\bar{t}(b)$ for $b\neq a$ is an element of $D_{\gamma+1}=D_\gamma$, contradicting the minimality of $\bar{t}$. 
\end{proof}

We continue with the proof of the theorem. As there are $<\kappa$ many minimal extensions for a fixed sequence of lengths $\bar{l}$ and $|{}^{dom(\bar{s})} \kappa| = \kappa$, we can define on $dom(\bar{s})$ functions which go above all minimal extensions in $T_{\bar{l}}$. For any $\bar{l} \in {}^{dom(\bar{s})}\kappa$ let 
$$\rho_{\bar{l}}:= sup\{\bar{l}(a): a \in dom(\bar{l}) \land \bar{l}(a) \not= dom(\bar{s}(a))\}.$$ 
Since $|dom(\bar{s})| < \kappa$, $\rho_{\bar{l}} \in \kappa$ for each $\bar{l} \in {}^{dom(\bar{s})}\kappa$.  First we deal with those minimal extensions $\bar{t}$ with $\rho_{\bar{l}_{\bar{t}}} $ is a successor. Then 
for each $a \in dom(\bar{s})$ and each  $dom(\bar{s}(a)) \leq \alpha <\kappa $ the set\\ 
\centerline{$H_{a, \alpha} = \{\bar{t}(a)(\alpha): \bar{t} \in T_{\bar{l}}, \rho_{\bar{l}} = \bar{l}(a) = \alpha +1\}$}
is bounded by the above claim. Thus there are functions $g_a \in {}^\kappa\kappa\uparrow$ such that $g_a(\alpha)> sup(H_{a, \alpha})$ for each $dom(\bar{s}(a)) \leq \alpha <\kappa$.
Thus, $g_a$ dominates at $\alpha$ the values of all minimal extensions $\bar{t}$ whose maximal length (not equaling the lengths of $\bar{s}$) is the successor $\alpha+1$ which again is witnessed at point $a \in dom(\bar{t}) (= dom(\bar{s}))$.

Second we deal with those minimal extensions $\bar{t}$ with $\rho_{\bar{l}_{\bar{t}}} $ is a limit. 
For each limit $\alpha \in \kappa$ with $\alpha \geq sup\{\bar{l}_{\bar{s}}(a): a \in dom(\bar{s})\}$ and $\beta \in \alpha$ the set
$$G_\beta = \{\bar{t}(a)(\beta): \bar{t} \in T_{\bar{l}}, \rho_{\bar{l}} = \alpha\},$$ 
which is again bounded by the claim. Thus inductively for each limit $\alpha \in \kappa$ with $\alpha \geq sup\{\bar{l}_{\bar{s}}(a): a \in dom(\bar{s})\}$ and $\beta \in \alpha$ we can define a function $g \in {}^\kappa \kappa \uparrow$ such that $g(\beta)> sup(G_\beta)$ if $g(\beta)$ is not defined already.
So, $g$ dominates below an ordinal $\alpha$ suitable values of all minimal extensions whose maximal length is the limit $\alpha$.

Finally for each $a \in dom(\bar{s})$, let $\dot{f}_a$ be a $P_a$-name  for the pointwise maximum of $g_a$ and $g$. Consider the condition $p \in D(\kappa, Q)$ with $dom(p) = dom(\bar{s})$ and $\forall a \in dom(p)~[ p(a) = (\bar{s}(a), \dot{f}_a)]$. By the density of $D$ we can find a condition $q \leq p$ such that $q \in D$. So the element $ \bar{t} \in {{}^{dom(q)}(\kappa^{<\kappa} \uparrow)}$ with $\bar{t} = \bar{q}_0$ is in $D_0$ and $\bar{s} \subseteq \bar{t}$ (by the extension relation). 
Then, some initial segment $\bar{t}'$ of $\bar{t}$ must be a minimal extension of $\bar{s}$. If $\bar{t}' \upharpoonright dom(\bar{s}) = \bar{s}$, then $\bar{s} \in D_\gamma$, which is a contradiction. Otherwise $\exists \bar{l}' \in {}^{dom(\bar{s})}\kappa \exists a \in dom(\bar{l}')~[\bar{l}'(a) > dom(\bar{s}(a))]$ and $l_{\bar{t}'} = \bar{l}'$. Let $\lambda$ be $sup\{\bar{l}'(b)|~ \bar{l}'(b) \not = dom(\bar{s}(b))\}$. If $\lambda = \alpha +1$ and $a \in dom(\bar{l}')~ [\alpha +1 = \bar{l}'(a)]$, then $\bar{t}(a)(\alpha) = \bar{t}'(a)(\alpha) < \dot{g}_a(\alpha)\leq \dot{f}_a(\alpha)$ which is a contradiction to $q \leq p$. Suppose $\lambda$ is a limit and $a \in dom(\bar{l}')$ with $\bar{l}'(a) > dom(\bar{s}(a))$. Take $\beta \in \bar{l}'(a)$ with $\beta \geq dom(\bar{s}(a))$. Then $\bar{t}(a)(\beta) = \bar{t}'(a)(\beta) < \dot{g}(\beta) \leq \dot{f}_a(\beta)$ which is a contradiction to $q \leq p$.
\end{proof}

\begin{defi}\label{def_evna} $   $
	\begin{enumerate}
		\item A sequence $\langle a_\xi : \xi < \lambda \rangle$, where each $a_\xi$ is in $[\kappa]^\kappa$, is eventually splitting if $\forall a \in [\kappa]^\kappa$ $\exists \xi < \lambda $ $\forall \eta > \xi$ $a_\eta$ splits $a$. 
		\item A sequence $\langle a_\xi : \xi < \lambda \rangle$, where each $a_\xi$ is in $[\kappa]^\kappa$, is eventually narrow if $\forall a \in [\kappa]^\kappa$ $\exists \xi < \lambda $ $\forall \eta > \xi$ $a \not\subseteq^* a_\eta$.
	\end{enumerate}	
\end{defi}

Note that $\tau = \langle a_\xi : \xi < \lambda \rangle$ is eventually splitting iff the sequence $\tau' = \langle b_\xi : \xi < \lambda \rangle$, defined as $b_{2\xi}= a_\xi$ and $b_{2\xi+1}= \kappa \setminus a_\xi$, is eventually narrow.

\begin{thm}\label{thm_pres}
Assume GCH, $\kappa^{<\kappa} = \kappa$, $\kappa$ is strongly inaccessible and let $cf(\lambda) > \kappa$. Then any eventually narrow sequence $\tau= \langle a_\xi: \xi < \lambda \rangle$ remains eventually narrow in  $V^{D(\kappa, Q)}$.
\end{thm}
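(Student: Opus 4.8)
The plan is to show that a $\kappa$-eventually narrow sequence $\tau=\langle a_\xi:\xi<\lambda\rangle$ stays eventually narrow in $V^{D(\kappa,Q)}$, using the derivative machinery of Theorem~\ref{thm1} as the essential tool. Suppose toward a contradiction that some condition $p$ forces a name $\dot a$ for an element of $[\kappa]^\kappa$ such that $p\Vdash \forall\xi<\lambda\,\exists\eta>\xi\,(\dot a\subseteq^* a_\eta)$; i.e.\ $p$ forces that $\dot a$ is contained mod~$<\kappa$ in cofinally many members of $\tau$. Since $D(\kappa,Q)$ is $\kappa^+$-c.c.\ and $\kappa$-closed and $\mathrm{cf}(\lambda)>\kappa$, we may reflect the situation to a suitable stage: the name $\dot a$ and all relevant decisions about it are supported on a set of size~$\kappa$, so there is a fixed $\xi_0<\lambda$ with $p\Vdash \dot a\subseteq^* a_{\eta}$ for cofinally many (indeed, for some $\eta>\xi_0$ we can keep passing higher). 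The goal is to derive from the open dense set $D=\{q: q$ decides $\dot a\cap\alpha$ for a club of $\alpha$, or more precisely $q$ forces $\dot a\subseteq^* a_\eta$ for a specific $\eta\}$ — suitably chosen — a contradiction with eventual narrowness in $V$.

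The core idea, in analogy with~\cite{drdl}, is as follows. For a given $\eta$ consider the open dense set $D^\eta$ of conditions that decide enough of $\dot a$ to witness $\dot a\subseteq^* a_\eta$ is impossible, or better: fix, for each bound $\zeta<\kappa$, the open dense set $D_\zeta=\{q:\exists\beta\in[\zeta,\kappa)\ (q\Vdash\beta\in\dot a)\}$, which is dense because $p$ forces $\dot a\in[\kappa]^\kappa$. Apply Theorem~\ref{thm1} to each such $D_\zeta$: its derivative sequence stabilizes at $D_\gamma = fin_{<\kappa}(Q,\kappa^{<\kappa}\uparrow)$, so in particular the stem $\bar p_0$ of $p$ lies in $D_\gamma$, and one can trace back through the levels of the derivative to reconstruct a witness. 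The point of the derivative construction — closure under shortening domains (item~\ref{2b}) and under the ``$\kappa$-branching unboundedness'' step (item~\ref{2c}) — is precisely that it lets us, from the fact that $\bar p_0\in D_\gamma$, extract in $V$ a set of $\kappa$-many conditions extending $p$ whose stems end-extend $\bar p_0$ and whose new values in the Hechler coordinates are cofinal in $\kappa$, each of which forces some new large element into $\dot a$. Collecting these forced elements produces a set $b\in[\kappa]^\kappa$ in $V$. Then a genericity/fusion argument along the $\kappa$-closed poset shows $p\Vdash b\subseteq^* \dot a$ — roughly, any generic extending $p$ meets one of these dense-below-$p$ sets and the stems cohere — hence $p\Vdash b\subseteq^* \dot a\subseteq^* a_\eta$ for cofinally many $\eta$. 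But $b\in[\kappa]^\kappa\cap V$ and $\tau$ is eventually narrow in $V$, so there is $\xi<\lambda$ with $b\not\subseteq^* a_\eta$ for all $\eta>\xi$; together with $b\subseteq^* a_\eta$ for cofinally many $\eta$ this is a contradiction.

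More carefully, the argument should be organized as: (i) reduce to a name $\dot a$ and a condition $p$ as above, using $\kappa^+$-c.c.\ to bound the support of $\dot a$ by some $P_{a}$ with $a$ of size $<\kappa$ in $Q$, and $\mathrm{cf}(\lambda)>\kappa$ to fix the cofinal set of $\eta$'s below $\lambda$ on which $p$ forces containment; (ii) for this $p$, run the derivative analysis of Theorem~\ref{thm1} relative to the dense sets $D_\zeta$ to obtain, for each $\zeta<\kappa$, an antichain or matrix of extensions of $p$ forcing elements of $\dot a$ above $\zeta$; (iii) use $\kappa$-closure (Lemma~\ref{dir-clo}) to fuse these into a single condition $p^*\leq p$ and a ground-model set $b$ with $p^*\Vdash b\subseteq^*\dot a$; (iv) conclude via eventual narrowness of $\tau$ in $V$. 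By the remark following Definition~\ref{def_evna}, the eventually splitting case follows immediately by coding $b_{2\xi}=a_\xi$, $b_{2\xi+1}=\kappa\setminus a_\xi$, so it suffices to prove the narrow case.

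The main obstacle, and where the real work lies, is step~(iii): extracting the ground-model set $b$ together with a condition forcing $b\subseteq^*\dot a$. The naive attempt — just collecting all elements forced into $\dot a$ by \emph{some} extension of $p$ — gives far too large a set and no single condition forces it into $\dot a$. One must instead use the derivative structure to find a \emph{coherent} family: the item~(\ref{2c}) step of the derivative (the one producing $\kappa$-many $\bar t_\beta\supseteq\bar s$ with values going above $\beta$ on the newly extended coordinates) is exactly what is needed to build, by recursion on $\zeta<\kappa$ using $\kappa$-closure, a descending $\kappa$-sequence of conditions whose union forces cofinally many elements into $\dot a$ in a uniform way; the subtlety is that at limit stages the stems and Hechler names must be amalgamated (as in the proof of Lemma~\ref{dir-clo}, pointwise suprema of names), and one must verify the stabilized derivative $D_\gamma$ actually guarantees the matrix exists below $p$ and not merely below some incompatible extension. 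Handling the bookkeeping so that the resulting $b$ genuinely has size $\kappa$ and the fused condition genuinely forces $b\subseteq^*\dot a$ — rather than merely $|b\cap\dot a|=\kappa$ — is the delicate point, and will likely require an auxiliary density argument showing that for club-many $\alpha<\kappa$, $b\cap\alpha$ is decided below $p$ to be contained in $\dot a$.
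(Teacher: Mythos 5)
There is a genuine gap at your step (iii), and it is not a matter of bookkeeping: that step cannot be carried out. You want a single fused condition $p^*\leq p$ and a ground-model set $b\in[\kappa]^\kappa$ with $p^*\Vdash b\subseteq^*\dot a$. First, the fusion you invoke does not exist: Lemma~\ref{dir-clo} only amalgamates $\gamma<\kappa$ many conditions, so $D(\kappa,Q)$ is merely ${<}\kappa$-(directed) closed and you cannot fuse the $\kappa$-many conditions, one per $\zeta<\kappa$, into a single $p^*$; if you could, the forcing would add no new subsets of $\kappa$ at all. Second, even granting some fusion device, the target statement is false for general names: for a name $\dot a$ coded from a Hechler generic, a density argument shows that below any condition one can force unboundedly many prescribed ordinals out of $\dot a$, so no condition forces a ground-model set of size $\kappa$ to be almost contained in $\dot a$. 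Your fallback of forcing only $|b\cap\dot a|=\kappa$ does not rescue the argument, since $|b\cap a_{\eta_0}|=\kappa$ is perfectly consistent with $b\not\subseteq^* a_{\eta_0}$.

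The paper's proof inverts the quantifiers, following Baumgartner--Dordal. One first extends $p$ so that $p\Vdash \dot a\setminus\alpha_0\subseteq a_{\eta_0}$ for a single fixed $\eta_0$ and $\alpha_0<\kappa$. Then, with $\dot h$ a name enumerating $\dot a$, the derivative hierarchy is applied to the dense sets $D=\{u:\exists j\,(u\Vdash\dot h(\zeta)=j)\}$ to show that for each $\zeta\geq\alpha_0$ there is a single value $k_\zeta$ realizable below $q(\bar t,\bar\tau)$ for \emph{every} admissible second coordinate $\bar\tau$ (the set $Z_{\bar t}(\zeta)$); this uniformity over the Hechler tails, not the mere density of $D$, is what the derivative induction buys, and it is absent from your outline. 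The ground-model set is $K=\{k_\zeta:\zeta\geq\alpha_0\}$, of size $\kappa$ because $\dot h(\zeta)\geq\zeta$. No single condition forces $K\subseteq^*\dot a$; instead each $k_\zeta$ is forced into $\dot a$ by \emph{some} $r\leq p$, and since every such $r$ also forces $\dot a\setminus\alpha_0\subseteq a_{\eta_0}$, one gets $K\setminus\alpha_0\subseteq a_{\eta_0}$ outright in $V$. To contradict this via eventual narrowness one needs $\eta_0$ to lie above a stage $\xi'$ chosen \emph{before} $K$ is known; the paper arranges this with an elementary submodel $\mathcal N\prec H(\varrho)$ of size $\kappa$ containing $\dot a$, $\dot h$ and $p$, so that $K\in\mathcal N$ and a single $\xi'$ works for all of $\mathcal N\cap[\kappa]^\kappa$ simultaneously. (Narrowness is in fact used a second time, inside the induction on derivatives, to rule out the case in which the potential values $j_\beta$ attached to an item~(\ref{2c}) family are all distinct.) The set-of-potential-values idea, the elementary submodel, and the uniformity encoded in $Z_{\bar t}(\zeta)$ are the heart of the proof, and all three are missing from your proposal.
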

\begin{proof}
Suppose not. Fix $p \in D(\kappa, Q)$ and a name $\dot{a}$ for a subset of $\kappa$ of size $\kappa$ such that
$$p \Vdash_{D(\kappa, Q)} \forall \xi < \lambda \exists \eta > \xi ~[\dot{a} \subseteq^* a_\eta].$$
Let $\varrho$ be a regular cardinal such that $D(\kappa, Q) \in H(\varrho) = \{x \in WF: \vert trcl(x)\vert < \varrho\}$. Let $\mathcal{N}$ be an elementary substructure of $H(\varrho)$ of size $\kappa$ such that $D(\kappa, Q) \in \mathcal{N}$, $\dot{a} \in \mathcal{N}$ and $\dot{f}^p_a \in \mathcal{N}$, where we denote $p(a)= (s^p_a, \dot{f}^p_a)$. Since $\tau$ is eventually narrow, for every $a\in [\kappa]^\kappa \cap \mathcal{N}$ there is a $\xi < \lambda$ such that for all $\eta > \xi$ we have $a \not\subseteq^* a_\eta$.  However $|\mathcal{N}| = \kappa$, $\tau$ is of length $\lambda$ and $cf(\lambda) > \kappa$, so this will yield $\kappa$-many $\xi$'s smaller than $\lambda$; so we can not reach $\lambda$ in $\kappa$-many steps. Hence $\exists \xi' < \lambda ~ \forall c \in \mathcal{N} \cap [\kappa]^\kappa~ \forall \eta' \geq \xi' ~ [c \not\subseteq^* a_{\eta'}]$.

Since $p \Vdash ``\forall \xi < \lambda \exists \eta > \xi~ \dot{a} \setminus a_\eta$ is of size less than $\kappa$'', in particular $p$ forces the existence of an ${\eta_0}$ greater than $\xi'$ (fixed two lines above) such that $\dot{a} \setminus a_{\eta_0}$ is of size less than $\kappa$. By extending $p$ (the extension is also called $p$) we have that there is an $\alpha_0 \in \kappa$ and $\eta_0 > \xi'$ such that $p \Vdash \forall j \geq \alpha_0$ if $j \in \dot{a}$ then $j \in a_{\eta_0}$.

Let $\dot{h}$ be a $D(\kappa, Q)$-name such that $\Vdash `` \dot{h} \hbox{ enumerates }\dot{a}"$. Then, in particular $\Vdash \forall \zeta < \kappa~ \dot{h}(\zeta) \geq \zeta$. To define $\dot{h}$ we only used $\dot{a}$ which was in $\mathcal{N}$ and $\dot{h} \in \mathcal{N}$ as well. For the purposes of this proof, we will use the following notions:
\begin{defi*} $  $
	\begin{enumerate}
    \item Let $u=(\bar{u}_0, \bar{u}_1) \in D(\kappa, Q)$. A sequence $\bar{t} \in fin_{<\kappa}(Q, \kappa^{<\kappa} \uparrow)$ is said to be $u$-admissible if $\bar{u}_0 \subseteq \bar{t}$ and $(\bar{t}\upharpoonright dom(u), \bar{u}_1)$ is a condition in $D(\kappa, Q)$.
    \item 
    Let $\bar{t} \in fin_{<\kappa}(Q, \kappa^{<\kappa} \uparrow)$ and let $\bar{\tau} = \langle \dot{g}_a: a \in dom(\bar{t}) \rangle$ where $\forall a \in dom(\bar{t})~ \dot{g}_a$ is a $P_a$-name for an element in ${}^\kappa\kappa\uparrow$. We say that $\bar{\tau}$ is $\bar{t}$-admissible if $q(\bar{t},\bar{\tau}) = \langle (\bar{t}(a), \dot{g}_a): a \in dom(\bar{t})\rangle$ is a condition in $D(\kappa, Q)$.
    \end{enumerate}
\end{defi*}

\begin{clm*}
Let $\bar{t} \in fin_{<\kappa}(Q, \kappa^{<\kappa} \uparrow)$ be $p$-admissible and $\zeta \geq \alpha_0$. Then $Z_{\bar{t}}(\zeta) \not= \emptyset$, where 
$$Z_{\bar{t}}(\zeta)= \{j: \forall \bar{\tau} [\bar{\tau}\hbox{ is } \bar{t}\hbox{-admissible}\to \exists r \leq_{D(\kappa, Q)} q(\bar{t}, \bar{\tau})\hbox{  such that }r \Vdash \dot{h}(\zeta) = j] \}.$$
\end{clm*}
\begin{proof}
Fix $\zeta \geq \alpha_0$ and let $D = \{u \in D(\kappa, Q): \exists j \in \kappa~[u \Vdash \dot{h}(\zeta) = j]\}$. Then $D$ is dense, open and
we can form the sequence of derivatives $\langle D_\alpha \rangle_{\alpha \leq \gamma}$ where $\gamma$ is the least with  $D_\gamma = D_{\gamma+1} =fin_{<\kappa}(Q, \kappa^{<\kappa} \uparrow)$. We  will prove the claim inductively on $\alpha \leq \gamma$ for all $p$-admissible $\bar{t} $.

If $\bar{t} \in D_0$ we have $\exists u \in D\hbox{ such that } \bar{u}_0 = \bar{t}$ and $\exists j~ [u \Vdash \dot{h}(\zeta) = j]$. Let $\bar{\tau}$ be $\bar{t}$-admissible. Then $q(\bar{t}, \bar{\tau})$ and $u~ (=(\bar{t}, \bar{u}_1))$ are compatible with common extension $r$. Thus $r \Vdash \dot{h}(\zeta) = j$ and so $Z_{\bar{t}}(\zeta) \not= \emptyset$. For limit ordinals $\alpha$ the claim is true by the induction hypothesis, since $D_\alpha = \bigcup \{D_\beta: \beta < \alpha\}$. Let $\bar{t} \in D_{\alpha +1}\setminus D_\alpha$ be $p$-admissible. By definition of $D_{\alpha+1}$ there are two possibilities:

First $\exists \bar{t}' \in D_\alpha \exists ! b \in Q: dom(\bar{t}') = dom(\bar{t}) \cup \{b\} \land \bar{t}' \upharpoonright dom(\bar{t}')\setminus \{b\} = \bar{t}~$. Since $\bar{t}$ is $p$-admissible, $\bar{t}'$ is also $p$-admissible (easily seen by definition) and by induction hypothesis $Z_{\bar{t}'}(\zeta) \not= \emptyset$. That is for some $j_0 \in \kappa$, we have\\ 
\centerline{$\forall \bar{\tau}' [\bar{\tau}'$ is $\bar{t}'$-admissible $\to \exists r \leq_{D(\kappa, Q)} q(\bar{t}', \bar{\tau}')$ such that $r \Vdash \dot{h}(\zeta) = j_0]$.$\hspace{0.9 cm} (\ast)$}
We claim that $j_0 \in Z_{\bar{t}}(\zeta)$. Indeed consider any $\bar{t}$-admissible $\bar{\tau}$. Then $\tau$ can be extended to a $\bar{t}'$-admissible $\bar{\tau}'$. Then $q(\bar{t}', \bar{\tau}') \leq q(\bar{t}, \bar{\tau})$ and by $(\ast)$, there is $r \leq_{D(\kappa, Q)} q(\bar{t}', \bar{\tau}')$ with $r \Vdash \dot{h}(\zeta) = j_0$. Then by transitivity $r \leq_{D(\kappa, Q)} q(\bar{t}, \bar{\tau})$ and we conclude that $Z_{\bar{t}}(\zeta) \not= \emptyset$.

Second there is a sequence $\langle \bar{t}_\beta : \beta \in \kappa \rangle$ of elements of $D_\alpha$ such that $\forall \beta < \kappa: dom(\bar{t}) \subseteq dom(\bar{t}_\beta)$ and $l_{\bar{t}_\beta \upharpoonright dom(\bar{t}) }  = \bar{l}$ (for some $\bar{l} \in {}^{dom(\bar{t})}\kappa$) and $\forall b \in dom(\bar{t})~[\bar{t}_\beta(b)(dom(\bar{t}(b))) > \beta]$. Since such a sequence exists in $H(\varrho)$ and the latter was an existential statement and $\mathcal{N} \preccurlyeq H(\varrho)$, by the  Tarski-Vaught-Criterion we can find a witness in $\mathcal{N}$. So assume $\langle \bar{t}_\beta : \beta \in \kappa \rangle \in \mathcal{N}$.

At this point we distinguish between two either-or cases. Case 1: There is a $j \in \kappa$ such that $j \in Z_{\bar{t}_\beta}(\zeta)$ for $\kappa$-many $\beta$. Let $\bar{\tau} = \langle \dot{g}_a: a  \in dom(\bar{t})\rangle$ be $\bar{t}$-admissible and for each $\beta< \kappa$ let $\bar{\tau}^\beta$ be $\bar{t}_\beta$-admissible with $\bar{\tau}^\beta\upharpoonright dom(\bar{t})= \bar{\tau}$. We have that ``$\exists r \leq q(\bar{t}_\beta, \bar{\tau}^\beta) ~[r \Vdash \dot{h}(\zeta) = j]$'' for $\kappa$-many $\bar{t}_\beta$'s, but not all of these $q(\bar{t}_\beta, \bar{\tau}^\beta)$ extend $q(\bar{t}, \bar{\tau})$. However since we have $\kappa$-many such $\bar{t}_\beta$'s and $\forall b \in dom(\bar{t}):~\bar{t}_\beta(b)(dom(\bar{t}(b))) > \beta$ we can find one (actually infinitely many) $q(\bar{t}_\beta, \bar{\tau}^\beta) \leq q(\bar{t}, \bar{\tau})$ and consequently infinitely many $r \leq q(\bar{t}, \bar{\tau})$ such that $j \in Z_{\bar{t}_\beta}(\zeta)$; hence $j \in Z_{\bar{t}}(\zeta) \not = \emptyset$. (To find a $q(\bar{t}_\beta, \bar{\tau}^\beta)$ as desired we choose $\beta$ such that $\beta > \bigcup\{\dot{g}_a(\bar{l}(a)): a \in dom(q_g)\}$. Then for such a $\beta$ and any $a \in dom(\bar{t})$ and $\alpha$ with $dom(\bar{t}(a))\leq \alpha < \bar{l}(a) : \big[ \bar{t}_\beta(a)(\alpha) > \beta > \dot{g}_a(\bar{l}(a)) > \dot{g}_a(\alpha)) \big]$).

Case 2: Fix by the induction hypothesis one $j_\beta \in Z_{\bar{t}_\beta}(\zeta)$ (e.g. choose the minimal one) and consider the set $J:=\{ j_\beta: \beta \in \kappa\}$. This set is of size $\kappa$, because otherwise it would have an upper bound in $\kappa$, so $\exists \alpha_0 < \kappa ~\forall \alpha, \beta \geq \alpha_0:~ j_\alpha =j_\beta$. But then we would have a $j$ which is in all $Z_{\bar{t}_\beta}(\zeta)$'s for $\beta \geq \alpha_0$, so we would have a $j$ which is in $\kappa$-many $Z_{\bar{t}_\beta}(\zeta)$'s, which is in fact Case1. So $|J| = \kappa$, but $J$ consists of $j_\beta$'s which are elements of $Z_{\bar{t}_\beta}(\zeta)$ and these were defined using $\dot{h}$ which was in $\mathcal{N}$ and the sequence  $\langle \bar{t}_\beta : \beta \in \kappa \rangle$ which was also in $\mathcal{N}$, so we may take $J \in \mathcal{N}$. This further means that $|J\setminus a_{\eta_0}| = \kappa$. So choose $\beta$ large enough such that $j_\beta \geq \alpha_0,~ \beta \geq \bigcup\{\dot{f}_a^p(\bar{l}(a)): a \in dom(p)\}$ and $j_\beta \not\in a_{\eta_0}$. Then for this particular $\beta$ we have $u \leq v \leq p$ where $\bar{v}_0 = \bar{t}$ and $\bar{v}_1 \upharpoonright dom(p)=  \bar{p}_1$ and $\bar{u}_0 = \bar{t}_\beta$ and $\bar{u}_1 \upharpoonright dom(p)=  \bar{p}_1$. For the first extension relation note that for $a  \in dom(v)$ we have $\bar{t}_\beta(a)(dom(\bar{t}(a))) > \beta \geq \dot{f}_a^p(\bar{l}(a))$ so this extension really holds. And since $j_\beta \in Z_{\bar{t}_\beta}(\zeta)$ there is by the definition of $Z_{\bar{t}_\beta}(\zeta)$ some $r_\beta \leq u$ such that $r_\beta \Vdash \dot{h}(\zeta) = j_\beta$. But then since $j_\beta \geq \alpha_0$ and $r_\beta \Vdash ``\forall j \geq \alpha_0$ if $j \in \dot{a}$ then $j \in a_{\eta_0}$'' ($p$ forced this). All together we have $j_\beta \in a_{\eta_0}$ which is a contradiction.
\end{proof}

By the claim $Z_{\bar{s}}(\zeta) \not= \emptyset$ for $\zeta \geq \alpha_0$ where  $\bar{s} \in {}^{dom(p)}(\kappa^{<\kappa} \uparrow)$ with $\bar{s} = \bar{p}_0$ since $p \leq p$. Choose $k_\zeta \in Z_{\bar{s}}(\zeta)$ for each $\zeta \geq \alpha_0$ and consider the set $K:=\{k_\zeta: ~\zeta \geq \alpha_0\}$. Since $\Vdash \dot{h}(\zeta) \geq \zeta$ we have $k_\zeta \geq \zeta$ for all $\zeta$, hence $K$ is of size $\kappa$. Since $K$ is definable from $\bar{s}$ and other parameters of $Z_{\bar{s}}(\zeta)$, we have $K \in \mathcal{N}$, so $K\setminus a_{\eta_0}$ has size $\kappa$. Now let $k_\zeta \in K \setminus a_{\eta_0}$ be chosen; so by definition $\exists r \leq p$ such that $r \Vdash \dot{h}(\zeta) = k_\zeta$ and again $r \Vdash ``\forall j \geq \alpha_0$ if $j \in \dot{a}$ then $j \in a_{\eta_0}$'', and $k_\zeta \geq \zeta \geq \alpha_0$ so we have $k_\zeta \in a_{\eta_0}$ which is a contradiction.
\end{proof}

Finally we formulate the theorem:

\begin{thm}\label{t1}
(GCH, $\kappa^{<\kappa} = \kappa$) Assume $\kappa$ is strongly unfoldable and $\beta, \delta, \mu$ are cardinals with $\kappa^+ \leq \beta= cf(\beta) \leq cf(\delta) \leq \delta \leq \mu$ and $cf(\mu)>\kappa$; then there exists a forcing poset $\mathbb{P}_{\kappa, \beta, \delta, \mu}$ such that the cardinal preserving generic extension $V^{\mathbb{P}_{\kappa, \beta, \delta, \mu}}$ satisfies $$\mathfrak{s}(\kappa) = \kappa^+ \land \mathfrak{b}(\kappa) = \beta \land \mathfrak{d}(\kappa) = \delta \land \mathfrak{c}(\kappa) = \mu.$$ 
\end{thm}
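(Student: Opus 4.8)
The plan is to take $\mathbb{P}_{\kappa,\beta,\delta,\mu}$ to be a two-step iteration $\mathbb{J}\ast\dot D(\kappa,\dot Q)$. Here $\mathbb{J}$ is the preparation forcing of Johnstone's theorem recalled in Section~\ref{prelim}, after which the strong unfoldability of $\kappa$ is indestructible under every $<\kappa$-closed $\kappa^+$-c.c.\ forcing; first I would check that $\mathbb{J}$ may be chosen to preserve cardinals and GCH, so that in $W:=V^{\mathbb{J}}$ we still have GCH, $\kappa^{<\kappa}=\kappa$ (automatic, since $\kappa$ remains inaccessible), and $\kappa$ strongly unfoldable with the stated indestructibility. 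Working in $W$, let $Q$ be the well-founded partial order with $\kappa^+\le\mathfrak{b}((Q,\le_Q))$ supplied by the Cummings--Shelah theorem, so that $D(\kappa,Q)$ is forcing equivalent to their poset $M(\kappa,\beta,\delta,\mu)$; thus $D(\kappa,Q)$ is cardinal preserving and forces $\mathfrak{b}(\kappa)=\beta\wedge\mathfrak{d}(\kappa)=\delta\wedge\mathfrak{c}(\kappa)=\mu$, and by Remark~\ref{rem_nonlin_it} it is $\kappa$-closed and has the $\kappa^+$-c.c. As both factors of $\mathbb{P}_{\kappa,\beta,\delta,\mu}$ are cardinal preserving, so is $\mathbb{P}_{\kappa,\beta,\delta,\mu}$, and it remains only to verify that $\mathfrak{s}(\kappa)=\kappa^+$ holds in $W^{D(\kappa,Q)}$.

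For the lower bound: since $D(\kappa,Q)$ is $<\kappa$-closed and has the $\kappa^+$-c.c., Johnstone's indestructibility theorem yields that $\kappa$ is still strongly unfoldable, hence weakly compact, in $W^{D(\kappa,Q)}$, so $\mathfrak{s}(\kappa)\ge\kappa^+$ there by Suzuki's theorem ($\mathfrak{s}(\kappa)\ge\kappa^+$ iff $\kappa$ is weakly compact). For the upper bound I would produce, \emph{in} $W$, a $\kappa$-eventually splitting sequence $\tau=\langle a_\xi:\xi<\kappa^+\rangle$. By the observation preceding Theorem~\ref{thm_pres}, interleaving the $a_\xi$ with their complements gives a $\kappa$-eventually narrow sequence $\tau'$ of length $\kappa^+$; since $\cf(\kappa^+)=\kappa^+>\kappa$, Theorem~\ref{thm_pres} applies, so $\tau'$ remains eventually narrow and therefore $\tau$ remains eventually splitting in $W^{D(\kappa,Q)}$. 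In particular $\{a_\xi:\xi<\kappa^+\}$ is a splitting family of size $\kappa^+$ there, whence $\mathfrak{s}(\kappa)\le\kappa^+$; combined with the lower bound this gives $\mathfrak{s}(\kappa)=\kappa^+$.

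To exhibit $\tau$ in $W$: using $2^\kappa=\kappa^+$, fix an enumeration $[\kappa]^\kappa=\{b_\alpha:\alpha<\kappa^+\}$ and build $\langle a_\xi:\xi<\kappa^+\rangle$ by recursion so that $a_\xi$ splits $b_\alpha$ for every $\alpha<\xi$; then for any $b=b_\alpha$ and any $\eta>\alpha$ the set $a_\eta$ splits $b$, so $\tau$ is eventually splitting. At stage $\xi$ one only has to split a family of at most $\kappa$ members of $[\kappa]^\kappa$ by a single set, which is a routine transfinite recursion of length $\kappa$ using the regularity of $\kappa$: run through the given sets with $\kappa$-fold repetition and, at each substage, pick two as-yet-undecided points of the current set, putting one into $a_\xi$ and keeping the other out, so that in the end $a_\xi$ meets each given set and its complement in a set of size $\kappa$. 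I do not expect a genuine obstacle here, nor in the remaining bookkeeping: the serious content has already been established in Theorem~\ref{thm_pres} (preservation of eventually narrow sequences along $D(\kappa,Q)$), and the main point that still needs care is checking that Johnstone's preparation $\mathbb{J}$ can be arranged so as to retain the GCH and cardinal-arithmetic hypotheses that the Cummings--Shelah theorem and Theorem~\ref{thm_pres} require over $W$.
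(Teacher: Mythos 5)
Your proposal is correct and follows the paper's proof in all essentials: prepare to make the strong unfoldability of $\kappa$ indestructible under $<\kappa$-closed $\kappa^+$-c.c.\ forcing, force with the Cummings--Shelah poset $D(\kappa,Q)$ to realize $\mathfrak{b}(\kappa)=\beta$, $\mathfrak{d}(\kappa)=\delta$, $\mathfrak{c}(\kappa)=\mu$, obtain $\mathfrak{s}(\kappa)\ge\kappa^+$ from weak compactness in the extension, and obtain $\mathfrak{s}(\kappa)\le\kappa^+$ from the preservation of eventually narrow sequences (Theorem~\ref{thm_pres}). The one point where you genuinely diverge is the source of the $\kappa^+$-sized eventually splitting sequence: the paper takes the first $\kappa^+$-many splitting reals added by the Hechler iterands, which form an eventually splitting sequence in the intermediate model $V^{P_{\kappa^+}}$ and are then preserved by the remainder of the iteration, whereas you construct the sequence already in the prepared ground model $W$ by a transfinite diagonalization using $2^\kappa=\kappa^+$. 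Your variant is, if anything, slightly cleaner: it applies Theorem~\ref{thm_pres} literally as stated, over the ground model of $D(\kappa,Q)$, and so avoids the (unstated in the paper) step of checking that the tail of the non-linear iteration beyond the first $\kappa^+$ coordinates is again a forcing of the form $D(\kappa,\cdot)$ over the intermediate model; the cost is an explicit use of the instance $2^\kappa=\kappa^+$ of GCH in $W$, which is available. Your closing caveat --- that the preparation must be arranged to preserve cardinals and GCH at and above $\kappa$ --- is exactly the point the paper addresses by using the lottery preparation, so there is no gap.
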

\begin{proof}
One part follows as in \cite{cmmngs}: In the ground model $V_0$ let $Q$ be a poset with $\mathfrak{b}((Q, \leq_q)) = \beta \leq \mathfrak{d}((Q, \leq_q)) = \delta$ and let $Q'$ be a cofinal well-founded subset of $Q$ ($Q'$ has the same bounding and dominating numbers).
Next we construct a poset which consists of a copy of $(\mu, \in)$ at the bottom and a cofinal copy of $Q'$ at the top; so let $R$ consist of pairs $(p, i)$ such that either $ i = 0 \land p \in \mu$ or $i = 1 \land p \in Q'$. The order relation is defined as $(p,i) \leq (q,j)$ iff $i = 0 \land j = 1$ or $i = j = 1 \land p \leq_{Q'} q$ or $i = j= 0 \land p \leq q$ in $\mu$. Finally let $\mathbb{P}_{\kappa, \beta, \delta, \mu} = D(\kappa, R)$. This forcing poset is $\kappa$-closed and has the $\kappa^+$-c.c., so we start the whole construction by Lottery preparation $\mathbb{Q}$ and let $V = V_0^{\mathbb{Q}}$. Lottery preparation preserves GCH for all cardinals $\geq \kappa$, which suffices to apply Theorem 2 in \cite{cmmngs}.  As shown in \cite{cmmngs} this forcing poset satisfies $V^{\mathbb{P}_{\kappa, \beta, \delta, \mu}} \vDash \beta = \mathfrak{b}(\kappa) \leq \delta = \mathfrak{d}(\kappa) \leq \mu = \mathfrak{c}(\kappa)$. 

Since $\kappa$ is strongly unfoldable and $\mathbb{P}_{\kappa, \beta, \delta, \mu}$ is $\kappa$-closed and has the $\kappa^+$-c.c., we may assume that $\kappa$ is still strongly unfoldable in the generic extension after doing Lottery preparation, so $\mathfrak{s}(\kappa) \geq \kappa^+$. Further the last theorem shows that there is a splitting family of size $\kappa^+$ in the extension: Every forcing adding a dominating real, also adds a splitting real. The first $\kappa^+$-many splitting reals added by the first $\kappa^+$-many steps of the Hechler iteration, build an eventually splitting sequence in the intermediate model $V^{P_{ \kappa^+}}$, which is preserved as a such in the final model. Hence $V^{\mathbb{P}_{\kappa, \beta, \delta, \mu}} \vDash \kappa^+ = \mathfrak{s}(\kappa)$.
\end{proof}

\begin{rem}
Thus under the assumption that there is a strongly unfoldable $\kappa$, it is consistent that all four characteristics are different, i.e.  $\kappa^+ = \mathfrak{s}(\kappa) < \beta = \mathfrak{b}(\kappa) < \delta = \mathfrak{d}(\kappa) < \mu = \mathfrak{c}(\kappa$).
\end{rem}

\section{Consistency of $Spec(\mathfrak{t}(\kappa)) = \{\kappa^+\} \land \mathfrak{c}(\kappa) = \kappa^{++}$}\label{no_long_towers}

\begin{defi}[\cite{grt}]\label{def_tower}
Let $\kappa$ be a regular uncountable cardinal.
\begin{enumerate}	
\item  A sequence $\langle a_\xi: \xi < \mu \rangle$ of elements in $[\lambda]^\lambda$ is a descending $\subseteq^*$-sequence  if for $\xi < \eta < \mu$ we have that $a_\eta \subseteq^* a_\xi$.

\item A family or sequence of subsets of $\lambda$ has the strong intersection property (SIP) if any subfamily of size less than $\lambda$ has intersection of size $\lambda$.

\item A $\lambda$-tower is a descending $\subseteq^*$-sequence with the SIP and no pseudo- intersection of size $\lambda$, in other words $\forall a \in [\omega]^\omega ~\exists \xi < \mu~ a \not\subseteq^* a_\xi$. 

\item Let the tower number $\mathfrak{t}(\lambda)$ denote the minimal cardinality of a $\lambda$-tower.

\item Finally we define the spectrum of $\lambda$-towers as $Spec(\mathfrak{t}(\lambda)) = \{|\tau|: \tau$ is a $\lambda$-tower$\}$.
\end{enumerate}
\end{defi}

By a diagonal argument one can find a pseudo- intersection for any family $\mathcal{F} \subseteq [\lambda]^\lambda$ with the SIP and $|\mathcal{F}| \leq \lambda$, hence $\lambda^+ \leq \mathfrak{t}(\lambda)$.

\begin{rem}
Note that a $\lambda$-tower is eventually narrow: Let $\langle a_\xi: \xi < \mu \rangle$ be a $\lambda$-tower and $a \in [\lambda]^\lambda$ be arbitrary. Since $\langle a_\xi: \xi < \mu \rangle$ is a $\lambda$-tower $\exists \xi < \mu~ |a \setminus a_\xi|= \lambda$. Let $\xi' > \xi$, since $\langle a_\xi: \xi < \mu \rangle$ is a descending $\subseteq^*$-sequence we have that $a_{\xi'} \subseteq^* a_\xi$. Now $a \setminus a_{\xi'} \supseteq [(a \setminus a_\xi)\setminus a_{\xi'}] \cup [(a \cap a_\xi)\setminus a_{\xi'}]$ $= [(a \setminus a_\xi)\setminus (a_{\xi'} \setminus a_\xi)] \cup [(a \cap a_\xi)\setminus a_{\xi'}]$, but recall that $(a \setminus a_\xi)$ was of size $\lambda$ and $|(a_{\xi'} \setminus a_\xi)|<\lambda$, so $a \setminus a_{\xi'}$ contains a subset of size $\lambda$, so itself has size $\lambda$. Since $\xi'$ was arbitrary all together we have $\forall a \in [\lambda]^\lambda ~\exists \xi < \mu~ \forall \xi' > \xi~ |a \setminus a_{\xi'}| = \lambda$, so the $\lambda$-tower is eventually narrow.
\end{rem}

Along a non-linear iteration of Hechler forcings, as defined in Definition \ref{def_CS}, also witnesses for $\mathfrak{t}(\kappa)$ are preserved; so are also the witnesses for other higher analogues of  $\mathfrak{t}$, e.g. $\mathfrak{t}_{cl}(\kappa)$ or $\mathfrak{t}^*(\kappa)$ (see \cite{fmss} for the definitions). Next we show that it is consistent that the generalized cardinal invariant $\mathfrak{t}(\kappa)$ has no witness of size $\mathfrak{c}(\kappa)$. Here we use a well-ordered $(\lambda, \in)$ in place of the well-founded poset $(Q, \leq_Q)$ as in Remark \ref{rem_nonlin_it} (2). 

\begin{thm}\label{notower}
Assume GCH, $\kappa^{<\kappa} = \kappa$ and $\kappa$ is strongly inaccessible. Let $\lambda > \kappa^+$ be a regular uncountable cardinal. Then in $V^{D(\kappa, (\lambda, \in))}$ there are no $\kappa$-towers of length $\lambda$, but there are descending $\subseteq^*$-sequences of length $\lambda$ with the SIP.
\end{thm}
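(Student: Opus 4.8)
The plan is to split the statement into two parts and argue them separately. For the positive part --- that there are descending $\subseteq^*$-sequences of length $\lambda$ with the SIP --- I would simply extract such a sequence from the generic object. The $<\kappa$-support iteration $D(\kappa,(\lambda,\in))$ of $\kappa$-Hechler forcing adds, at each coordinate $\xi<\lambda$, a dominating real $d_\xi\in{}^\kappa\kappa\uparrow$, and from a dominating real one canonically reads off a set $a_\xi\in[\kappa]^\kappa$ (for instance the range of $d_\xi$, or the set of values where a suitable "diagonal'' function is exceeded) with the property that $a_\eta\subseteq^* a_\xi$ whenever $\xi<\eta$, and moreover that every $<\kappa$-sized subfamily has intersection of size $\kappa$ (the SIP), since each step only shrinks the previous set modulo $<\kappa$ and $\kappa$-closure of the tail of the iteration keeps small subfamilies inside an initial segment. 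Concretely one should check that the natural "generic filter on $[\kappa]^\kappa$'' recipe --- $a_\xi:=\{\alpha<\kappa: e(\alpha)<d_\xi(\alpha)\}$ for a fixed scale $e$, or better, a carefully chosen diagonalization so that $a_\eta\subseteq^* a_\xi$ is literally forced by the extension relation of $\mathbb{H}(\kappa)$ --- yields a descending $\subseteq^*$-chain, and that SIP holds because any $<\kappa$ many of the $a_\xi$ lie in $V^{P_\nu}$ for some $\nu<\lambda$ and their intersection is an unbounded set there which is not changed into a bounded set by the $\kappa$-closed remainder.

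For the negative part --- no $\kappa$-tower of length exactly $\lambda$ --- the idea is to argue by contradiction using the preservation machinery established earlier. Suppose $\langle b_\xi:\xi<\lambda\rangle$ were a $\kappa$-tower in $V^{D(\kappa,(\lambda,\in))}$. By the Remark preceding the theorem, a $\kappa$-tower is eventually narrow. Now $D(\kappa,(\lambda,\in))$ is the $<\kappa$-support iteration of $\mathbb{H}(\kappa)$ of length $\lambda$ (Remark \ref{rem_nonlin_it}(2)), and it has the $\kappa^+$-c.c.; since $\lambda>\kappa^+$ is regular, the tower, which has $\lambda$ many pieces each a subset of $\kappa$, cannot be captured at any proper initial stage --- but more usefully, a name-counting / reflection argument shows the tower must already be eventually narrow "stage by stage''. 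The key point is this: write $\lambda$ as an increasing union of initial segments; by $\kappa^+$-c.c. and $\mathrm{cf}(\lambda)>\kappa$, there is a stationary (indeed club) set of $\nu<\lambda$ with $\mathrm{cf}(\nu)>\kappa$ such that $\langle b_\xi:\xi<\nu\rangle\in V^{P_\nu}$ and is eventually narrow there (being an initial segment of an eventually narrow sequence). Then Theorem \ref{thm_pres} applies with $\lambda$ replaced by $\nu$ --- since $\mathrm{cf}(\nu)>\kappa$ and the tail $P_{[\nu,\lambda)}$ is (a quotient which is again) a $D(\kappa,Q)$-type iteration over $V^{P_\nu}$ --- so $\langle b_\xi:\xi<\nu\rangle$ remains eventually narrow in $V^{D(\kappa,(\lambda,\in))}$. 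But the $\nu$-th Hechler generic $d_\nu$, or rather the set derived from it, produces an $a\in[\kappa]^\kappa$ with $a\subseteq^* b_\xi$ for all $\xi<\nu$: indeed the Hechler real at stage $\nu$ dominates every ground-model-of-$V^{P_\nu}$ function, hence eventually sits inside every $b_\xi$ with $\xi<\nu$ (each such $b_\xi$ has an increasing enumeration in $V^{P_\nu}$, and being $\subseteq^*$-decreasing they can be "threaded'' by a single function which $d_\nu$ then dominates). This $a$ witnesses that $\langle b_\xi:\xi<\nu\rangle$ is \emph{not} eventually narrow in $V^{D(\kappa,(\lambda,\in))}$, a contradiction.

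I expect the main obstacle to be the middle step: verifying that a single set $a$ added at stage $\nu$ is almost-contained in \emph{all} earlier $b_\xi$ simultaneously. This requires that the $\subseteq^*$-decreasing sequence $\langle b_\xi:\xi<\nu\rangle$ of length $\mathrm{cf}(\nu)>\kappa$ can be $\subseteq^*$-diagonalized inside $V^{P_\nu}$ by \emph{one} function that the Hechler real then dominates --- equivalently, that $\langle b_\xi:\xi<\nu\rangle$ has a pseudo-intersection in $V^{P_\nu}$, or at least that the Hechler real's range is a pseudo-intersection. This is exactly the place where one uses that $\mathrm{cf}(\nu)>\kappa$ together with $\kappa$-closure: a $\subseteq^*$-decreasing sequence of cofinality $>\kappa$ need not have a pseudo-intersection in general, so one must instead argue more carefully --- pick a club $C\subseteq\nu$ of order type $\mathrm{cf}(\nu)$, observe that along $C$ the sequence is genuinely $\subseteq^*$-decreasing, and use that the Hechler poset at stage $\nu$ is $\kappa$-closed over $V^{P_\nu}$ to build, by a fusion/density argument, a condition forcing the generic set into cofinally many, hence (by decreasingness) all, of the $b_\xi$. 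Getting the quantifiers right here, and correctly invoking Theorem \ref{thm_pres} over the intermediate model $V^{P_\nu}$ with the tail iteration (checking the tail is again of the required form and that GCH, $\kappa^{<\kappa}=\kappa$, strong inaccessibility all persist), is the delicate core of the argument; the rest is bookkeeping with the $\kappa^+$-c.c. and the reflection of eventual narrowness to a club of stages.
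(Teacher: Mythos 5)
Your overall skeleton (reflect the putative tower to an initial stage $\nu$ of cofinality $>\kappa$, apply Theorem \ref{thm_pres} over $V^{P_\nu}$, then exhibit a pseudo-intersection of the reflected segment to get a contradiction) is the same as the paper's, but two steps are genuinely broken. First, the reflection step: an initial segment of an eventually narrow sequence is \emph{not} automatically eventually narrow, because for $a\in[\kappa]^\kappa\cap V^{P_\nu}$ the witness $\xi(a)$ with $a\not\subseteq^* b_{\xi(a)}$ could lie in $[\nu,\lambda)$. What you actually need is that all such witnesses for sets in $V^{P_\nu}$ already lie below $\nu$, i.e. that $\langle b_\xi:\xi<\nu\rangle$ is itself a $\kappa$-tower (equivalently, by descendingness, eventually narrow) \emph{in} $V^{P_\nu}$. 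The paper gets this by counting: GCH gives $|\mathcal{P}(\kappa)\cap V^{P_\alpha}|<\lambda$ for each $\alpha<\lambda$, so $f(\alpha)=\sup\{\xi(a):a\in\mathcal{P}(\kappa)\cap V^{P_\alpha}\}<\lambda$; the Approximation Lemma (via the $\kappa^+$-c.c.) yields a ground-model $g\geq f$, and one intersects the club of closure points of $g$ with the club of $\nu$ at which the initial segment has a $P_\nu$-name and with the stationary set $E^{\kappa^+}_\lambda$. None of this is in your sketch, and the parenthetical justification you do give is a non sequitur.

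Second, and more seriously, your source of the final contradiction is wrong. You claim the stage-$\nu$ Hechler generic yields $a$ with $a\subseteq^* b_\xi$ for all $\xi<\nu$. Dominating the increasing enumerations of the $b_\xi$ does not place the range of the generic inside the $b_\xi$, and no density or fusion argument can force $\mathrm{ran}(d_\nu)\subseteq^* b_\xi$: requiring all new stem values to land in a prescribed set is not a dense (nor decidable by a single condition) requirement in $\mathbb{H}(\kappa)$. Worse, if the Hechler step \emph{did} thread every $\subseteq^*$-descending sequence of $V^{P_\nu}$ of cofinality $>\kappa$, it would thread towers, flatly contradicting Theorem \ref{thm_pres}, which you invoke in the same breath: a pseudo-intersection of $\langle b_\xi:\xi<\nu\rangle$ is precisely a counterexample to its eventual narrowness. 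The correct --- and much easier --- move, which the paper makes, is to note that a pseudo-intersection is already sitting inside the tower itself: any $b_\delta$ with $\nu<\delta<\lambda$ satisfies $b_\delta\subseteq^* b_\xi$ for all $\xi<\nu$, contradicting the preserved eventual narrowness of the reflected segment. Your positive half (the sets built from the dominating reals form a descending $\subseteq^*$-sequence with the SIP) matches the paper and is fine.
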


\begin{proof}
Suppose $\tau = \langle a_\xi : \xi < \lambda \rangle$ is a $\kappa$-tower in $V^{D(\kappa, (\lambda, \in))}$. Then by the definition of a $\kappa$-tower we have $\forall a \in [\kappa]^\kappa \cap V^{D(\kappa, (\lambda, \in))} ~ \exists \xi = \xi(a) < \lambda ~ a \not\subseteq^* a_\xi$. Since GCH holds it is easily observed that the following holds: 

\begin{clm*}
$\forall \alpha < \lambda$ we have $| \mathcal{P}(\kappa) \cap V^{P_\alpha}| < \lambda$.
\end{clm*} 
Consider the function $f$, where $f(\alpha) = sup\{\xi(a): a \in \mathcal{P}(\kappa) \cap V^{P_\alpha}\}$. By the above claim and the regularity of $\lambda$ we have $f(\alpha) < \lambda$, i.e. $f \in {}^{\lambda}\lambda \cap V^{D(\kappa, (\lambda, \in))}$. By the Approximation Lemma (\cite[Lemma IV.7.8.]{kun})  (remember that $D(\kappa, (\lambda, \in))$ has the $\kappa^+$-c.c. and $\kappa^+ <\lambda$ is regular) we can find a function $g: \lambda \to \lambda$ in the ground model $V$ such that $ \forall \alpha < \lambda ~ f(\alpha)\leq g(\alpha)$.

The set $M= \{\gamma < \lambda: \langle a_\xi: \xi < \gamma \rangle \in V^{P_\gamma}\}$ contains a club $C$ in $\lambda$ (as the fixed points of a normal function form a club). 
By a well-known argument (\cite[Lemma III.6.13]{kun}) we have that $D:=\{\alpha < \lambda : \forall \beta < \alpha~ g(\beta) < \alpha\}$ is a club in $\lambda$, so the intersection $C \cap D$ is also a club. Since $E^{\kappa^+}_\lambda = \{\alpha < \lambda: cf(\alpha) = \kappa^+\}$ is a stationary set, there is $\gamma$ such that $\langle a_\xi: \xi < \gamma\rangle \in V^{P_\gamma}$, $\forall \alpha < \gamma ~g(\alpha)<\gamma$ and $cf(\gamma) = \kappa^+ > \kappa$. 
Then $\langle a_\xi: \xi < \gamma \rangle$ is a $\kappa$-tower in $V^{P_\gamma}$. Indeed, if $a \in [\kappa]^\kappa \cap V^{P_\gamma}$ then $a$ is already added at stage $\alpha$, for some $\alpha < \gamma$ (since $cf(\gamma)> \kappa$ and such stages do not add new $\kappa$-reals). Now $|a_{f(\alpha)} \setminus a| = \kappa$,  $f(\alpha)\leq g(\alpha)<\gamma$ and so $\langle a_\xi: \xi < \gamma \rangle$ is a $\kappa$-tower in $V^{P_\gamma}$.  Hence it is also a $\kappa$-tower in $V^{D(\kappa, (\lambda, \in))}$ because $\kappa$-towers are preserved and $V^{D(\kappa, (\lambda, \in))}$ is obtained by iterated forcing over $V^{P_\gamma}$. This yields a contradiction, since $a_\delta$ for $\gamma<\delta<\lambda$ is almost contained in each $a_\beta$ for $\beta < \gamma$.

For the last statement of the theorem, consider the reals $f_\xi \in {}^\kappa \kappa$ where $f_\xi$ is the dominating real added at stage $\xi$ of the iteration. So if $\xi < \eta < \lambda$ we have $f_\xi <^*f_\eta$. Define each $c_\xi$ as $c_\xi = \{(\alpha,\beta) \in  \kappa \times \kappa:~ \beta \geq f_\xi(\alpha) \}$ and consider the sequence $\langle c_\xi: \xi < \lambda \rangle$. Using a bijection between $\kappa$ and $\kappa \times \kappa$ it is easily seen that this is a descending $\subseteq^*$-sequence with the SIP.
\end{proof}

\begin{cor}\label{cor_towers}
Assume GCH, $\kappa^{<\kappa} = \kappa$ and $\kappa$ is strongly inaccessible. Then there is a cardinal preserving extension where $\mathfrak{c}(\kappa)= \kappa^{++}$ and $Spec(\mathfrak{t}(\kappa)) = \{\kappa^+\}$ hold.
\end{cor}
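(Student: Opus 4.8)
The plan is to take $\mathbb{P}:=D(\kappa,(\kappa^{++},\in))$, i.e.\ to apply Theorem~\ref{notower} and Remark~\ref{rem_nonlin_it}(2) with $\lambda=\kappa^{++}$, so that $\mathbb{P}$ is the $<\kappa$-support iteration of $\kappa$-Hechler forcing of length $\kappa^{++}$. By Remark~\ref{rem_nonlin_it}(1) the poset $\mathbb{P}$ is $\kappa$-closed and has the $\kappa^+$-c.c., hence preserves all cardinals and cofinalities; being $\kappa$-closed it adds no bounded subsets of $\kappa$, so $\kappa$ stays strongly inaccessible and $\kappa^{<\kappa}=\kappa$ in $V^{\mathbb{P}}$. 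No large-cardinal preparation is needed here, since (unlike in Theorem~\ref{t1}) we make no claim about $\mathfrak{s}(\kappa)$. Thus $V^{\mathbb{P}}$ is the desired cardinal-preserving extension, and it remains to verify $\mathfrak{c}(\kappa)=\kappa^{++}$ and $Spec(\mathfrak{t}(\kappa))=\{\kappa^+\}$ there.

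For $\mathfrak{c}(\kappa)=\kappa^{++}$: the iteration adds at each of its $\kappa^{++}$ stages a dominating real over the corresponding initial model, and these reals are pairwise distinct (each dominates all the earlier ones), so $2^{\kappa}\ge\kappa^{++}$. For the converse inequality one runs the usual nice-name count: under GCH with $\kappa^{<\kappa}=\kappa$ the standard iteration bookkeeping gives $|\mathbb{P}|=\kappa^{++}$, and since $\mathbb{P}$ has the $\kappa^+$-c.c.\ there are at most $(\kappa^{++})^{\kappa}=\kappa^{++}$ nice $\mathbb{P}$-names for subsets of $\kappa$, whence $2^{\kappa}\le\kappa^{++}$. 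This is exactly the computation of $\mathfrak{c}(\kappa)$ in \cite{cmmngs}, using that $\mathfrak{b}((\kappa^{++},\in))=\mathfrak{d}((\kappa^{++},\in))=\kappa^{++}$.

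For $\kappa^{+}\in Spec(\mathfrak{t}(\kappa))$ I would fix in $V$ a $\kappa$-tower $\langle b_\beta:\beta<\kappa^{+}\rangle$: one exists because $\kappa^{+}\le\mathfrak{t}(\kappa)$ always, and under GCH it is produced by the usual transfinite recursion of length $\kappa^{+}$, using $2^{\kappa}=\kappa^{+}$ to list and defeat the potential $\kappa$-sized pseudo-intersections and the regularity of $\kappa$ to pass limit stages while maintaining the SIP. By the remark preceding Theorem~\ref{notower} this sequence is eventually narrow, and $cf(\kappa^{+})=\kappa^{+}>\kappa$, so by Theorem~\ref{thm_pres}, applied to the sequence $\langle b_\beta:\beta<\kappa^{+}\rangle$ and the poset $D(\kappa,(\kappa^{++},\in))$ (note $(\kappa^{++},\in)$ is well-founded with $\mathfrak{b}((\kappa^{++},\in))=\kappa^{++}\ge\kappa^{+}$), it remains eventually narrow in $V^{\mathbb{P}}$, i.e.\ keeps the property of having no $\kappa$-sized pseudo-intersection. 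The other tower requirements — being a descending $\subseteq^{*}$-sequence and having the SIP — are absolute between $V$ and $V^{\mathbb{P}}$, because $\mathbb{P}$ adds no new bounded subsets of $\kappa$ and no new $<\kappa$-sized subfamilies and preserves all cardinalities. Hence $\langle b_\beta:\beta<\kappa^{+}\rangle$ is still a $\kappa$-tower in $V^{\mathbb{P}}$, so $\kappa^{+}\in Spec(\mathfrak{t}(\kappa))$ and, together with $\kappa^{+}\le\mathfrak{t}(\kappa)$, also $\mathfrak{t}(\kappa)=\kappa^{+}$.

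It remains to show $Spec(\mathfrak{t}(\kappa))\subseteq\{\kappa^{+}\}$, and this is where the main work lies. Since $\mathfrak{t}(\kappa)=\kappa^{+}$ no $\kappa$-tower has cardinality $<\kappa^{+}$, and cardinal preservation forces any larger tower length to be $\ge\kappa^{++}$, so it suffices to rule out $\kappa$-towers of length $\kappa^{++}$ — which is precisely Theorem~\ref{notower} — after reducing an arbitrary $\kappa$-tower of length $\ge\kappa^{++}$ to one of that length. The reduction I would carry out: pass first to a cofinal subsequence of order type $\nu=cf(\text{length})$, which is again a $\kappa$-tower, now of regular length $\nu\ge\mathfrak{t}(\kappa)=\kappa^{+}$; then note that the $=^{*}$-blocks of a descending $\subseteq^{*}$-sequence are convex, so a $\kappa$-tower of regular length $\nu$ that realizes fewer than $\nu$ distinct $=^{*}$-classes would, by the regularity of $\nu$, contain a final block of order type $\nu$ and hence a $\kappa$-sized pseudo-intersection, a contradiction; therefore it realizes $\nu$ distinct $=^{*}$-classes, and since there are only $2^{\kappa}=\kappa^{++}$ of them we get $\nu\le\kappa^{++}$, while $\nu=\kappa^{++}$ is excluded by Theorem~\ref{notower}, forcing $\nu=\kappa^{+}$. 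Thus the only regular cardinal in $Spec(\mathfrak{t}(\kappa))$ is $\kappa^{+}$, and with the standard convention that tower lengths are regular cardinals this gives $Spec(\mathfrak{t}(\kappa))=\{\kappa^{+}\}$; combined with the previous paragraph we obtain $\mathfrak{t}(\kappa)=\kappa^{+}<\mathfrak{c}(\kappa)=\kappa^{++}$, as in Corollary~\ref{cor_towers}. The main obstacle is exactly this last step: Theorem~\ref{notower} only forbids towers of length \emph{exactly} $\kappa^{++}$, so one must first use the block combinatorics above to bring an arbitrary (possibly singular-length) witness down to a regular cardinal length — and in particular to $\kappa^{++}$ — before Theorem~\ref{notower} can be invoked; everything else above is routine. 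Finally, the descending $\subseteq^{*}$-sequences of length $\kappa^{++}$ with the SIP provided by Theorem~\ref{notower} show that these lengths are still realized, just never by towers, which is the point of the corollary.
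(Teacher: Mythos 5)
Your proposal is correct and follows the same core route as the paper: force with $D(\kappa,(\kappa^{++},\in))$, invoke Theorem~\ref{notower} with $\lambda=\kappa^{++}$, and compute $\mathfrak{c}(\kappa)=\kappa^{++}$ from the $\kappa^{++}$ many dominating reals together with a nice-name count under the $\kappa^+$-c.c. The difference is that the paper's proof is essentially one line, while you supply two arguments it leaves implicit. First, you witness $\kappa^+\in Spec(\mathfrak{t}(\kappa))$ by building a $\kappa^+$-tower in $V$ under GCH and preserving it via Theorem~\ref{thm_pres} (its non-extendibility is exactly eventual narrowness, and the descending property and SIP survive because the forcing is ${<}\kappa$-closed and cardinal preserving); the paper instead relies tacitly on the spectrum being nonempty, which one can also get by taking a maximal descending SIP-sequence. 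Second, and more importantly, you observe that Theorem~\ref{notower} only excludes towers of length \emph{exactly} $\kappa^{++}$, and you close the gap by passing to a cofinal subsequence of regular length and using the convexity of the $=^*$-blocks to show a regular-length tower realizes as many $=^*$-classes as its length, hence has length at most $2^\kappa=\kappa^{++}$. That step is sound and genuinely needed: Definition~\ref{def_tower} allows arbitrary ordinal lengths, and without the regular-length (or strict-decrease-with-regular-cofinality) convention you invoke, one could splice a $\kappa^+$-tower below a pseudo-intersection of a strictly decreasing length-$\kappa^{++}$ SIP-sequence and obtain a ``tower'' of cardinality $\kappa^{++}$, so the paper's one-line deduction needs the same convention. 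In short: same approach, but your write-up fills real gaps in the paper's own argument.
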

\begin{proof}
It suffices to choose $\lambda= \kappa^{++}$ in Theorem \ref{notower}. Then 
$V^{D(\kappa, (\kappa^{++}, \in))} \vDash Spec(\mathfrak{t}(\kappa)) = \{\kappa^+\}$ as there are no long $\kappa$-towers and $\mathfrak{t}(\kappa) \geq \kappa^+$. $\mathfrak{c}(\kappa) \geq \kappa^{++}$ is witnessed by the $\kappa^{++}$-many Hechler $\kappa$-reals added. $\mathfrak{c}(\kappa) \leq \kappa^{++}$ is seen by the standard argument of counting nice names (see~\cite{kun}). 
\end{proof}

\section{ Questions}\label{questions_section}

The method in Section \ref{sbdc} allows $\mathfrak{s}(\kappa)$ to be $\kappa^+$. However generalizing the methods in \cite{drdl} seems not to be enough to control $\mathfrak{s}(\kappa)$ arbitrarily above $\kappa^+$.

\begin{question}
Is it consistent that $\kappa^+ <\mathfrak{s}(\kappa)<\mathfrak{b}(\kappa)<\mathfrak{d}(\kappa)<2^\kappa$ for $\kappa$ at least weakly compact?
\end{question}

\end{document}